\documentclass[final,nomarks]{dmtcs-episciences}

\usepackage[margin=1in]{geometry}		
\usepackage{leftindex}
\usepackage{comment}
\usepackage[color=yellow,textsize=scriptsize]{todonotes}
\usepackage{amsmath}	
\usepackage{fancyhdr}	
\usepackage{graphicx}		
\usepackage{cancel}					
\usepackage{amsthm}
\usepackage{enumerate}
\usepackage{hyperref}

\usepackage{xcolor}
\hypersetup{
    colorlinks,
    linkcolor={red!50!black},
    citecolor={blue!50!black},
    urlcolor={black}
}

\theoremstyle{definition}
\newtheorem{thm}{Theorem}[section]
\newtheorem{lem}[thm]{Lemma}
\newtheorem{prop}[thm]{Proposition}
\newtheorem{cor}[thm]{Corollary}

\theoremstyle{definition}
\newtheorem{defn}[thm]{Definition}
\newtheorem{exmp}[thm]{Example}

\theoremstyle{remark}

\usepackage[utf8]{inputenc}
\usepackage{subfigure}

\title{Fixed Point Homing Shuffles}
\author{Jonathan Parlett}
\affiliation{Drexel University, Philadelphia, USA}
\keywords{Combinatorics, Permutations, Top-swops, Card Shuffling}

\newcommand{\f}{\mathcal{F}} 
\newcommand{\fw}{\mathcal{F}_w} 
\newcommand{\T}{\mathcal{T}} 
\newcommand{\Tw}{\mathcal{T}_w} 
\newcommand{\M}{\mathcal{M}} 
\newcommand{\Mo}{\overline{\mathcal{M}}} 
\newcommand{\Mw}{\mathcal{M}_w} 
\newcommand{\Mow}{\Mo_w} 
\newcommand{\Mowp}{\Mo_{w'}} 
\newcommand{\mck}{\mathcal{M}_{\text{ck}}} 
\newcommand{\Ts}{\mathcal{T}_{\text{swops}}} 
\newcommand{\id}{\varepsilon} 
\newcommand{\tn}{\operatorname{tn }} 
\newcommand{\NN}{\mathbb{N}}

\newcommand{\xmto}[1]{\overset{ #1 }{ \mapsto }}

\newcommand{\tup}[1]{\left( #1 \right)}

\newcommand{\set}[1]{\left\{ #1 \right\}}
\newcommand{\cyc}[1]{\operatorname{cyc}_{#1}}
\newcommand{\oln}{\operatorname{oln}}
\newcommand{\card}[1]{\left| #1 \right|}

\begin{document}

\publicationdata{vol. 27:1, Permutation Patterns 2024}{2025}{9}{10.46298/dmtcs.14653}{2024-10-31; 2024-10-31; 2025-05-15}{2025-07-09}

\maketitle
\begin{abstract}\ \\
    We study a family of maps from $S_n \to S_n$ we call \emph{fixed point homing shuffles}. These maps generalize a few known problems such as Conway's Topswops first described by Martin Gardner (1988), and a card shuffling process studied by Gweneth McKinley (2015).
    We show that the iterates of these homing shuffles always converge, and characterize the set $U_n$ of permutations that no homing shuffle sorts.
    We also study a homing shuffle that sorts anything not in $U_n$, and find how many iterations it takes to converge in the worst case.
\end{abstract}

\section{Introduction}

\subsection{Card Games}

Given a deck of playing cards labeled $1,2,\dots,n$, we may play a round of the \textit{topswops} game as follows. If the top card of the deck is numbered $k$, we take the first $k$ cards from the top of the deck and then place them back on top in reverse order. We may repeat this process. For example, for a deck of $5$ cards arranged $43215$ (from top to bottom), we proceed as follows, underlining the cards that will be reversed on the next iteration.
\begin{align*}
\underline{342}15 \to \underline{24}315 \to \underline{4231}5 \to \underline{1}3245.
\end{align*}
We may note that any further rounds simply pick up the first card and put it back on the top. So the deck stays the same if the $1$-card is in front. The topswops game was proposed by John Conway \cite[Chapter Six]{gardner2020}\footnote{The alternative spelling ``topswaps'' is also widespread.}, and subsequently addressed by Herbert Wilf, Donald Knuth, and recently by Linda Morales, and Hal Sudborough \cite{gardner2020,TAoCP4A,morales2010}.  In particular Wilf showed that, perhaps surprisingly, the $1$-card always eventually comes to the front in at most $2^{n-1}$ iterations \cite[Chapter Six, Appendix]{gardner2020}. This bound was later improved by Knuth to the $f_{n+1}-1$ where $f_{n+1}$ is the $(n+1)$-th Fibonacci number \cite[\S 7.2.1.2, Exercise 108]{TAoCP4A}. Recently a quadratic lower bound was shown by Morales and Sudborough \cite{morales2010}. Topswops is also called the \textit{deterministic pancake problem} \cite{DWest}, as it is related to the \textit{pancake problem} where the goal is to sort a list using the minimal number of prefix reversals \cite{chitturi200918,cohen1995problem,gates1979bounds,heydari1997diameter}.

Topswops has some similarities with another card game studied by Gweneth McKinley in \cite{McK2015}. In this game, the front card $k$ is placed at position $k$ and every card lying in between simply moved one step upwards. For example, on our above deck, with the cards to be shifted underlined, the game proceeds as follows.
\[
3\underline{42}15 \to 4\underline{231}5 \to 2\underline{3}145 \to 3\underline{21}45 \to 2\underline{1}345 \to \underline{1}2345.
\]
The two games are similar in that they both involve moving the front card to its natural position in each step. The ultimate outcomes also have something in common: In both games, the $1$-card eventually lands in its natural position at the top of the deck, and the state remains the same after this point. McKinley showed that the $1$-card eventually comes to the front in at most $2^{n-1}-1$ iterations, and that this bound is tight. That is, there are decks such that it takes exactly $2^{n-1}-1$ turns for the $1$-card to come to the front. These similarities suggest a generalization we will explore in this work.

\subsection{Contributions}

In this work we study a family of games that subsume the above. We show that, for any game in our family, the $1$-card always comes to the front in at most $2^{n-1}$ iterations, generalizing the results of Wilf and McKinley. We note that, for our above example, McKinley's shuffle sorted the deck in question to its natural order $12345$, while Topswops did not. We characterize the decks that are not sortable by any game in our family, and exhibit a game that sorts all decks that are sortable. For this game, which we call \textit{max shuffle}, we show that the $1$-card comes to the front in at most $2n-3$ iterations, and that this bound is tight. During the review process, we were informed that much of our results here are present in a prior work of Joyce Pechersky \cite{Pe2023}. In particular, our Theorem \ref{f-terminates} and Theorem \ref{cardUn} appear as Theorem 2 and Theorem 4 in \cite{Pe2023}. Pechersky also studies our map $\T$ in more depth than we cover here. See Theorem 5 and Corollary 4 of \cite{Pe2023}. Our discussion of max shuffles remains a novel contribution, and we hope that our elementary presentation serves as good starting point for further research.

\section{Notations}
Let $\NN = \set{0,1,2,\ldots}$ be the set of non-negative integers.
For $n \in \NN$ we set $[n] = \set{1,2,\ldots,n}$.
For $a,b \in \NN$ we define the closed interval $[a,b] = \set{a,a+1,a+2,\ldots,b-1,b}$, and the open interval $(a,b) = \set{a+1,a+2,\ldots,b-1}$, with the half open $(a,b],[a,b)$ intervals defined similarly.
We denote the symmetric group by $S_n := \{\sigma : [n] \to [n] : \sigma \text{ is a bijection}\}$.
Its multiplication is composition: $(ab)(i) = a(b(i))$ for all $a, b \in S_n$ and $i \in [n]$.
We denote the identity in $S_n$ by $\varepsilon$.
We will use $ \cyc{i_{1},i_{2},\dots,i_{k}} $ to describe a cycle in $ S_{n} $, and in particular a transposition will be denoted $ t_{i,j} $.
We define the \emph{one-line notation} of a permutation $ w \in S_{n} $ to be the $n$-tuple $ \oln\tup{w} :=  \tup{ w\tup{ 1 },w\tup{ 2 },\dots,w\tup{ n } }$. When it is not ambiguous, we may omit the commas. For example, for $t_{1,2} \in S_3$ we may write $\oln{t_{1,2}}=\tup{213}$. Occasionally, we will identify the permutation $w$ with its one-line notation.

\section{Fixed Point Homing Shuffles}

One way to characterize the shuffles we have described so far is that they always place the front card in its natural position, and do not disturb any cards numbered higher than the front card. We take these key properties as our definition of a homing shuffle.

\begin{defn}
    \label{shufDef}
    A \emph{homing shuffle} is a map $ \f : S_{n} \to S_{n} $ that satisfies the following conditions: For any $ w \in S_{n} $, if we set $ k := w\tup{ 1 } $, then
    \begin{enumerate}[]
        \item\textbf{(a)} We have $ \f\tup{ w }\tup{ k } = k $. Thus, $ k $ is always a fixed point of $ \f\tup{ w } $.
        \item\textbf{(b)} We have $ \f\tup{ w }\tup{ i } = w\tup{ i } $ for all $ i > k $.
    \end{enumerate}
    To keep notation concise, we will set $\fw := \f\tup{w} $. Note that $\f^k_w$ will always mean $\f^k\tup{w}$, never $\tup{\f\tup{w}}^k$.
\end{defn}
We may informally describe a homing shuffle as follows. Consider a deck of cards numbered $1,2,\dots,n$. If the top card is numbered $k$, take the first $k$ cards from the top of the deck. From this pile take the $k$-card and return it to the top of the deck. For the remaining cards return them to the top of the deck in any order. Specifying the order in which you return the remaining cards to the top of the deck yields different shuffles. For example, placing the remaining cards back in reverse order yields the Topswops game, while simply placing them back in their original order yields McKinley's.

We now consider some notable examples. In what follows, let us use the notation $w \overset{\f}{ \mapsto } w'$ as shorthand for $w' = \f(w)$. Our first example is in some sense the simplest homing shuffle.

\begin{exmp}[Transposition Shuffle]
    \label{examp1}
   Consider the map
   \begin{align*}
       \T : S_{n} &\to S_{n}, \\
       w &\mapsto w \cdot t_{1, k}, 
       \qquad \text{ where } k = w\tup{1}.
   \end{align*}
   For example, if $ w \in S_{5} $ with $ \oln\tup{w} = \tup{2 3 4 1 5} $, then (underlining the cards to be swapped in the next iteration) we see that
   \begin{align*}
        \tup{\underline{2} \underline{3} 4 1 5} &\xmto{\T} \tup{\underline{3} 2 \underline{4} 1 5} \xmto{\T} \tup{\underline{4} 2 3 \underline{1} 5} \xmto{\T} \tup{\underline{1} 2 3 4 5} .
   \end{align*}
   
   To see that $\T$ is indeed a homing shuffle, we note that condition (a) of Definition \ref{shufDef} is satisfied since
   \[
       \Tw\tup{ k } = \tup{w \cdot t_{1,k}} \tup{ k } = w\tup{t_{1,k}\tup{k}} = w\tup{1} = k .
   \]
   For all $ i \not\in \set{1, k} $, we have $t_{1,k}\tup{i} = i$ and therefore
   \[
       \Tw\tup{ i } = \tup{w \cdot t_{1,k}}\tup{ i } = w\tup{t_{1,k}\tup{i}}  = w\tup{ i } .
   \]
   So, in particular, if $ i > k $ then $  \T_w\tup{ i } = w\tup{ i } $, and condition (b) of Definition \ref{shufDef} is satisfied. $\T$ may be described informally as the shuffle that swaps the the front card $k$ with the card currently in position $k$ of the deck. 
\end{exmp}

\begin{exmp}[McKinley's Shuffle]
    This shuffle was first studied by Gweneth McKinley in \cite{McK2015}.
    It is defined as the map
    \begin{align*}
        \mck : S_{n} & \to S_{n},\\	
        w & \mapsto w\cyc{1,2,\dots,k}, \qquad \text{where } k = w\tup{1} .
    \end{align*}
    For example, considering again our permutation $w \in S_5$ with $\oln{w} = \tup{2 3 4 1 5}$, and underlining the portion of the one line notation to be shifted back next iteration, we see that  
    \begin{align*}
        \tup{2 \underline{3} 4 1 5} &\xmto{\mck} \tup{3 \underline{2 4} 1 5} \xmto{\mck} \tup{2 \underline{4} 3 1 5} \xmto{\mck} \tup{4 \underline{2 3 1} 5} \xmto{\mck} \tup{2 \underline{3} 1 4 5}\\
        &\xmto{\mck} \tup{3 \underline{2 1} 4 5} \xmto{\mck} \tup{2 \underline{1} 3 4 5} \xmto{\mck} \tup{1 2 3 4 5}.
    \end{align*}
   To see that $\mck$ is a homing shuffle, we note that
   \[
       \mck\tup{ w }\tup{ k } = w \tup{ \cyc{1,2,\dots,k }\tup{ k }} = w\tup{ 1 } = k.
   \]
   And for $ i > k $ we have $ \cyc{1,2,\dots,k}\tup{ i } = i $ so that 
   \[
       \mck\tup{ w }\tup{ i } = w \tup{ \cyc{1,2,\dots,k }\tup{ i }} = w\tup{ i }.
   \]
   As we stated above, $\mck$ can be described as the shuffle you get by simply placing the remaining cards (after placing the front card $k$) back on top of the deck without disturbing their original order.
\end{exmp}

For the next example we'll introduce the reflections in $ S_{n} $.
\begin{defn}
    If $ m \in [n] $ then we define the permutation $r_m \in S_n$ by setting
    \[
        r_{m}\tup{ i }
        = \begin{cases}
            m-i+1, & \text{ if } i \le m\\
            i, & \text{ otherwise}.
        \end{cases}
    \]
    In other words, $r_m$ sends $1,2,\ldots,m$ to $m,m-1,\ldots,1$ and leaves the remaining elements of $[n]$ unchanged.
\end{defn}

\begin{exmp}[Topswops]
    The topswops game was introduced by John Conway \cite[Chapter Six]{gardner2020} (see also \cite[Aufgabe 1]{BWM2002-2} and \cite[\S 7.2.1.2, Exercise 107]{TAoCP4A}).
    It can be described by the following map:
    \begin{align*}
        \Ts : S_{n} & \to S_{n},\\
        w & \mapsto w r_{k}, \qquad \text{where } k = w\tup{1}.
    \end{align*}
    For example, if $\oln{w} = \tup{23415}$, underlining the prefix to be reversed we have
    \begin{align*}
        \tup{\underline{23}415} &\xmto{\Ts} \tup{\underline{324}15} \xmto{\Ts} \tup{\underline{4231}5} \xmto{\Ts} \tup{\underline{1}3245}.
    \end{align*}
    Now $\Ts$ is indeed a homing shuffle since 
    \[
    \Ts\tup{ w }\tup{ k } = w r_{k}\tup{ k } = w\tup{ 1 } = k
    \]
    and $ r_{k} $ fixes all $ i > k $, so that $ \Ts\tup{ w }\tup{ i } = wr_{k}\tup{ i } = w\tup{ i } $ for all $ i > k $. Described informally, $\Ts$ is the shuffle you get by picking up the first $k$ cards and placing them back on the top of the deck in reverse order.
\end{exmp}

Before we turn to an analysis of homing shuffles more generally, we first establish some simple but useful identities concerning how homing shuffles must change the deck from one iteration to the next. 
\begin{prop}
    \label{prop1}
    Let $ \f $ be a homing shuffle, $ w \in S_{n} $, and $ k := w\tup{ 1 } $. Then
    \begin{enumerate}[]
        \item\textbf{(a)} We have $\fw\tup{ [i,j] } = w\tup{ [i,j] }$ for any $ k < i \le j \le n $.
        \item\textbf{(b)} We have $\fw\tup{ [i] } = w\tup{ [i] }$ for any $ i \ge k $.
        \item\textbf{(c)} We have $\fw\tup{ [k-1] } = w\tup{ [k] } \setminus \set{k}$.
    \end{enumerate}
\end{prop}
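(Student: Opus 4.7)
The plan is to deduce all three parts from the two conditions in Definition \ref{shufDef} together with the fact that $\fw$ is a bijection on $[n]$. No clever argument is needed; the whole proposition is a bookkeeping exercise that records identities we will use repeatedly later.

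For part (a), I will simply observe that every $m \in [i,j]$ satisfies $m \ge i > k$, so condition (b) of Definition \ref{shufDef} gives $\fw(m) = w(m)$ pointwise on $[i,j]$, and therefore the images of this set under $\fw$ and $w$ coincide. This step is essentially a restatement of the definition.

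Part (b) will be reduced to part (a) by complementation. Assuming $k < n$ (the case $k = n$ gives $[i] = [n]$ trivially), part (a) with indices $k+1$ and $n$ yields $\fw([k+1,n]) = w([k+1,n])$. Since both $\fw$ and $w$ are bijections $[n] \to [n]$ and $[k]$ is the complement of $[k+1,n]$ in $[n]$, taking complements gives $\fw([k]) = w([k])$. For $i$ with $k \le i \le n$, I will then write $[i] = [k] \sqcup [k+1,i]$ as a disjoint union and combine the previous identity with a second application of part (a) on $[k+1,i]$.

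For part (c), I will use the disjoint decomposition $[k] = [k-1] \sqcup \{k\}$. Applying $\fw$ and using condition (a) of Definition \ref{shufDef} (namely $\fw(k) = k$), we get $\fw([k]) = \fw([k-1]) \sqcup \{k\}$. But part (b) applied to $i = k$ gives $\fw([k]) = w([k])$, and $k \in w([k])$ because $k = w(1)$. Removing the element $k$ from both sides yields $\fw([k-1]) = w([k]) \setminus \{k\}$. The only thing to be careful of is the degenerate case $k = 1$, in which $[k-1] = \emptyset$ and both sides are empty, so the identity still holds. I do not expect any real obstacle; the argument is purely set-theoretic once the definitions are unfolded.
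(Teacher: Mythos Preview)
Your proposal is correct and follows essentially the same route as the paper: pointwise equality for (a), complementation using bijectivity for (b), and removing the fixed point $k$ from $\fw([k]) = w([k])$ for (c). The only cosmetic difference is in (b), where the paper complements $[i+1,n]$ directly to obtain $[i]$, whereas you first complement $[k+1,n]$ to get $\fw([k]) = w([k])$ and then adjoin $[k+1,i]$ via part (a); both arguments are equivalent and equally valid.
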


\begin{proof}
(a) By part (b) of Definition \ref{shufDef},
    \[
    \fw\tup{ i } = w\tup{ i } \text{ for } i > k.
    \]
This implies $ \fw\tup{ [i,j] } = w\tup{ [i,j] } $ for $ k < i \le j \le n$, and (a) is proved.

(b) Let $i \geq k$. Then, $k < i+1 \leq n \leq n$. Thus, applying (a) to $i+1$ and $n$ instead of $i$ and $j$, we find $\fw\tup{ [i+1,n] } = w\tup{ [i+1,n] }$.
But both $\fw$ and $w$ are permutations, and thus respect complementation:
\begin{align*}
\fw\tup{[n] \setminus [i+1,n]} &= [n] \setminus \fw\tup{[i+1,n]}
\qquad \text{ and } \\
w\tup{[n] \setminus [i+1,n]} &= [n] \setminus w\tup{[i+1,n]}.
\end{align*}
The right hand sides here are equal, since $\fw\tup{ [i+1,n] } = w\tup{ [i+1,n] }$. Thus the left hand sides are equal, \textit{i.e}., we have $\fw\tup{[n] \setminus [i+1,n]} = w\tup{[n] \setminus [i+1,n]}$. But this is just saying $\fw\tup{[i]} = w\tup{[i]}$. This proves part (b).

(c) We have
\begin{align*}
\fw\tup{[k-1]}
&= \fw\tup{[k] \setminus \set{k}}
= \fw\tup{[k]} \setminus \set{\fw\tup{k}}
= w\tup{[k]} \setminus \set{k} ,
\end{align*}
where the last step used the facts that $ \fw\tup{[k]} =w\tup{[k]}$ (a particular case of part (b)) and $\fw\tup{k} = k$ (a consequence of Definition \ref{shufDef} (a)).
Thus part (c).
\end{proof}

\section{Termination Numbers}

The most well studied question regarding the Topswops game is concerning the longest sequence of moves.
Gardner, in \cite{gardner2020}, asks for the ``arrangement of the numbers [that] provides the longest game of topswops''.
This question has been open since at least 1988 with only upper and lower bounds being known \cite{gardner2020,TAoCP4A,morales2010}.
McKinley, however, determines the exact number $(2^{n-1}-1)$ for $\mck$ in \cite{McK2015}.
Here, we are interested in what we can say with regards to all homing shuffles.
In this section we will show that all homing shuffles terminate in a finite number of iterations.

We first note a technical point highlighted in the introduction: the game itself may continue for ever but the state no longer changes. Indeed, any further moves simply pick up the $1$-card and place it back on top of the deck.
Thus, when we speak of termination, we really mean that the state is no longer changing after some iteration. This said, we begin our discussion with a simple observation.

Given a homing shuffle $\f$ and a permutation $w \in S_n$, we have $\f\tup{w} = w$ if and only if $w\tup{1} = 1$. Indeed, if we set $k = w\tup{1}$ as in Definition \ref{shufDef}, then $k = 1$ implies $\f\tup{w} = w$ by Definition \ref{shufDef}, whereas conversely, $\f\tup{w} = w$ implies $w\tup{1} = k = \f\tup{w}\tup{k} = w\tup{k}$ and thus $1 = k$ by unapplying $w$. This leads to the following definition of termination.

\begin{defn}[Termination Numbers] \ \\ \vspace{-1pc}
    \begin{enumerate}[]
        \item\textbf{(a)} Say that a homing shuffle $ \f $ \emph{terminates} after $m$ iterations on a given permutation $w \in S_n$ if $ \f^{ m }_w\tup{ 1 }=1 $ (or, equivalently, $\f^{m+1}\tup{w} = \f^m\tup{w}$).
        \item\textbf{(b)} The \emph{termination number} of $ \f $, denoted $ \tn \f $, is the smallest $ m \in \mathbb{N} $ such that $ \f $ terminates after $m$ iterations on every $ w \in S_{n} $. In other words,
    \[
    \tn\f := \min\set{m \in \mathbb{N} \;|\; \f^{m}\tup{ w }\tup{ 1 } = 1 \text{ for all }  w \in S_{n}} .
    \]
    The existence of such an $m$ is not obvious, but follows from Theorem \ref{f-terminates} below.
    \end{enumerate}
\end{defn}

\begin{prop}[Termination is forever]
    \label{termPr}
   Fix a $ w \in S_{n} $.
   Let $m \in \mathbb{N}$ be such that $\f$ terminates after $m$ iterations on $w$.
   Then, $ \f^{q}\tup{ w } = \f^{m}\tup{ w } $ for all $ q \geq m $.
   Thus, $\f$ terminates after $q$ iterations on $w$ for all $q \geq m$.
\end{prop}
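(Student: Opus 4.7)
The plan is to reduce this to the simple observation, already established in the paragraph preceding the proposition, that for any homing shuffle $\f$ and any $v \in S_n$, one has $\f(v) = v$ if and only if $v(1) = 1$. Once this is in hand, the proposition becomes a one-step induction on $q$.

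First I would apply this observation with $v := \f^m(w)$. By hypothesis, $\f$ terminates after $m$ iterations on $w$, so $v(1) = \f^m(w)(1) = 1$. Therefore $\f(v) = v$, which is exactly $\f^{m+1}(w) = \f^m(w)$. This handles the base case $q = m+1$; the case $q = m$ is trivial.

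Next I would induct on $q \geq m$. Assuming inductively that $\f^{q}(w) = \f^m(w)$ for some $q \geq m$, apply $\f$ to both sides to obtain
\[
\f^{q+1}(w) \;=\; \f\bigl(\f^{q}(w)\bigr) \;=\; \f\bigl(\f^m(w)\bigr) \;=\; \f^{m+1}(w) \;=\; \f^m(w),
\]
where the final equality is the base case. This completes the induction and shows $\f^q(w) = \f^m(w)$ for all $q \geq m$. In particular $\f^{q+1}(w)(1) = \f^m(w)(1) = 1$, so $\f$ terminates after $q$ iterations on $w$ for every such $q$.

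There is no real obstacle here; the entire argument hinges on the fixed-point characterization $\f(v) = v \iff v(1) = 1$, which has already been justified using parts (a) and (b) of Definition \ref{shufDef}. The proposition is essentially a restatement of the fact that once a permutation is fixed by $\f$, iterating $\f$ cannot move away from it.
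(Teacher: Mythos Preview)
Your proof is correct and follows essentially the same route as the paper: both obtain $\f^{m+1}(w) = \f^m(w)$ from the termination hypothesis and then apply $\f$ repeatedly (the paper writes this as a chain of equalities, you as a formal induction) to conclude $\f^q(w) = \f^m(w)$ for all $q \geq m$. One cosmetic slip: in your final sentence you should cite $\f^{q}(w)(1) = 1$ rather than $\f^{q+1}(w)(1) = 1$ to match the definition of ``terminates after $q$ iterations,'' though this follows immediately from what you have already shown.
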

\begin{proof}
   Since $\f$ terminates after $m$ iterations on $w$, we have $\f^m\tup{w} = \f^{m+1}\tup{w}$.
   Applying $\f$ to this equality, we obtain
   $\f^{m+1}\tup{w} = \f^{m+2}\tup{w}$.
   Applying $\f$ once again, we obtain
   $\f^{m+2}\tup{w} = \f^{m+3}\tup{w}$.
   Proceeding like this indefinitely, we arrive at the chain of equalities
   $\f^m\tup{w} = \f^{m+1}\tup{w} = \f^{m+2}\tup{w} = \cdots$.
   In other words, $ \f^{q}\tup{ w } = \f^{m}\tup{ w } $ for all $ q \geq m $.
   This entails $ \f^{q}\tup{ w }\tup{1} = \f^{m}\tup{ w } \tup{1} = 1$ for all $q \geq m$, so that $\f$ terminates after $q$ iterations on $w$.
\end{proof}

To illustrate let us determine the termination number of the simplest shuffle $ \T $.

\begin{thm}
\label{T-n-1}
The transposition shuffle $\T$ terminates after $ n-1 $ iterations on each $w \in S_n$. We have
   \[
   \tn\T = n - 1.
   \]
\end{thm}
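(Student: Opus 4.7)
My plan is to prove the upper bound $\tn\T \le n-1$ via a fixed-point accumulation argument, and match it with the lower bound $\tn\T \ge n-1$ using a single explicit permutation. Writing $w_m := \T^m\tup{w}$, the invariant I track is
\[
F_m := \set{i \in [n] \setminus \set{1} : w_m\tup{i} = i},
\]
the set of fixed points of $w_m$ away from position $1$. The crux will be the claim: whenever $w_m\tup{1} \ne 1$, we have $\card{F_{m+1}} \ge \card{F_m} + 1$.

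To prove the claim, set $k := w_m\tup{1}$, so $k \ne 1$. Since $w_m$ is a bijection and $w_m\tup{1} = k$, we cannot have $w_m\tup{k} = k$, so $k \notin F_m$. Because $w_{m+1} = w_m \cdot t_{1,k}$ and $t_{1,k}$ fixes every position outside $\set{1, k}$, each $i \in F_m$ (which lies in $[n] \setminus \set{1, k}$) still satisfies $w_{m+1}\tup{i} = w_m\tup{t_{1,k}\tup{i}} = w_m\tup{i} = i$. Meanwhile $w_{m+1}\tup{k} = w_m\tup{1} = k$ puts $k$ newly into $F_{m+1}$, giving the strict increase.

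Applying the claim iteratively, if $w_m\tup{1} \ne 1$ for every $m \in \set{0, 1, \ldots, n-2}$, then $\card{F_{n-1}} \ge n-1$, forcing $F_{n-1} = [n] \setminus \set{1}$ and hence $w_{n-1}\tup{1} = 1$ by bijectivity. Combined with Proposition \ref{termPr}, this proves $\tn\T \le n-1$. For the matching lower bound, I would take $w$ with $\oln\tup{w} = \tup{2, 3, \ldots, n, 1}$ and verify by induction that $\oln\tup{w_m} = \tup{m+2, 2, 3, \ldots, m+1, m+3, \ldots, n, 1}$ for $0 \le m \le n-2$, whence $w_m\tup{1} = m+2 \ne 1$ all the way up to $m = n-2$. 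I expect the main obstacle to be recognizing the right invariant: once one sees that the fixed points outside position $1$ strictly accumulate and never vanish, the bound $n-1$ is essentially forced by cardinality, and the cyclic permutation is the natural candidate for saturating it.
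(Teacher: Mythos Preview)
Your argument is correct, but it takes a different route from the paper's. The paper analyzes the disjoint cycle decomposition of $w$: it observes that multiplying by $t_{1,w(1)}$ shortens the cycle through $1$ by one (ejecting $w(1)$ as a new fixed point) while leaving every other cycle untouched, so after at most $n-1$ steps the cycle through $1$ is trivial. Your proof instead tracks only the count of fixed points away from position $1$, showing it strictly increases until termination. The cycle-decomposition approach yields more: it tells you exactly which element becomes fixed at each step and that the remaining cycles are preserved verbatim, which the paper later reuses to characterize the sortables of $\T$ (Proposition~\ref{sortableT}). Your counting argument, on the other hand, is more elementary---no cycle notation needed---and is essentially a sharpened, $\T$-specific version of the Wilf-number monotonicity (Lemma~\ref{lemwilf}) that the paper invokes for general homing shuffles. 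Both proofs saturate the bound with the same $n$-cycle $w = (2,3,\ldots,n,1)$; your explicit inductive formula for $\oln(w_m)$ is a fine way to verify the lower bound.
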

\begin{proof}
Let $ w \in S_{n} $. Write the disjoint cycle decomposition of $ w $ as follows:
\[
w = \cyc{i_{1,1},i_{1,2},\dots,i_{1,k_{1}}} \cdots \cyc{i_{m,1},i_{m,2},\dots,i_{m,k_{m}}} \cdot \cyc{1,w\tup{ 1 },w^{2}\tup{ 1 },\dots,w^{k}\tup{ 1 }} ,
\]
where we put the cycle containing $1$ at the very end (we assume that this cycle is not trivial, since otherwise $\T$ terminates immediately on $w$).
Now by the definition of $ \T\tup{ w } $ we have
\begin{align*}
    \T\tup{ w } &= w t_{1,w\tup{ 1 }}
                = \cyc{i_{1,1},i_{1,2},\dots,i_{1,k_{1}}} \cdots \cyc{i_{m,1},i_{m,2},\dots,i_{m,k_{m}}} 
                \cdot \underbrace{\cyc{1,w\tup{ 1 },w^{2}\tup{ 1 }\dots,w^{k}\tup{ 1 }} \cdot \, t_{1,w\tup{ 1 }}}_{
                =\cyc{1,w^{2}\tup{ 1 },\dots,w^{k}\tup{ 1 }}} 
                \\
                &= \cyc{i_{1,1},i_{1,2},\dots,i_{1,k_{1}}} \cdots \cyc{i_{m,1},i_{m,2},\dots,i_{m,k_{m}}} \cdot \cyc{1,w^{2}\tup{ 1 },\dots,w^{k}\tup{ 1 }}.
\end{align*}
Thus, $\T\tup{w}$ has the same disjoint decomposition as $w$, except that the cycle containing $1$ has been shortened by one element (namely, the element $w\tup{1}$ has been removed from it and is now a fixed point).
Iterating this, we see that any further application of $\T$ reduces the length of the cycle containing $1$ by $1$. After at most $n-1$ iterations, this length will become $1$, so the permutation will fix $1$, and thus $\T$ terminates after $n-1$ iterations on $w$.
Moreover, $\T$ does not terminate any earlier when $w$ is an $n$-cycle, so we conclude that $ \tn\T = n - 1 $.
\end{proof}

As we mentioned above, all homing shuffles terminate, not just $\T$. An elegant argument of Herbert S. Wilf gives a bound on the termination number of $ \Ts $ \cite[Appendix to Chapter Six]{gardner2020}, but generalizes easily to the case of any homing shuffle:

\begin{defn}[Wilf Number]
The \emph{Wilf number} of a permutation $ w \in S_{n} $ is given by
   \begin{align*}
       W\tup{w} &:= \sum_{\substack{i \in [n];\\ w\tup{ i }=i}} 2^{i-2} \in \mathbb{Q}_{\geq 0}.
   \end{align*}
We note that if $w(1) \ne 1$ then $W(w) \in \mathbb{Z}_{\ge 0}$, since all $i \in [n]$ satisfying $w\tup{i} = i$ are $\geq 2$ and thus the addends $2^{i-2}$ are positive integers.
\end{defn}

\begin{prop}
    \label{wBound}
The Wilf number is bounded above by $ 2^{n-1} $. That is,
    \[
    W\tup{ w } < 2^{n-1} \text{ for all } w \in S_{n}
    \]
\end{prop}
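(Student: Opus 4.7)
The plan is to simply bound the sum defining $W(w)$ by replacing the (possibly partial) set of fixed points of $w$ with all of $[n]$, and then evaluate the resulting geometric-like sum.

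First I would note that since every term $2^{i-2}$ is positive, we have the trivial inequality
\[
W(w) = \sum_{\substack{i \in [n];\\ w(i)=i}} 2^{i-2} \;\leq\; \sum_{i \in [n]} 2^{i-2},
\]
with equality when $w = \varepsilon$. So the claim reduces to showing that $\sum_{i=1}^{n} 2^{i-2} < 2^{n-1}$.

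Next I would evaluate this sum. Splitting off the $i=1$ term and using the standard geometric series identity,
\[
\sum_{i=1}^{n} 2^{i-2} = \tfrac{1}{2} + \sum_{i=2}^{n} 2^{i-2} = \tfrac{1}{2} + \sum_{j=0}^{n-2} 2^{j} = \tfrac{1}{2} + \left( 2^{n-1} - 1 \right) = 2^{n-1} - \tfrac{1}{2}.
\]
Combining, $W(w) \le 2^{n-1} - \tfrac{1}{2} < 2^{n-1}$, which is the claim.

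There is no real obstacle here; the statement is a clean arithmetic bound on a finite sum. The only subtlety worth flagging is the $i=1$ contribution, whose exponent $2^{-1}$ is the reason the bound is stated as a strict inequality (and the reason the paper mentions $W(w) \in \mathbb{Q}_{\geq 0}$ in general, only reducing to $\mathbb{Z}_{\geq 0}$ when $w(1) \neq 1$). In later applications, I would expect this strict inequality to be leveraged together with the fact that $W$ takes integer values on non-terminated states, so that $W(w) \leq 2^{n-1} - 1$ whenever $w(1) \neq 1$ — but this sharper integer bound is not what the proposition asks for.
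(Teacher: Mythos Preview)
Your proof is correct and essentially identical to the paper's: both bound the sum over fixed points by the full sum $\sum_{i=1}^n 2^{i-2}$ and evaluate it to $2^{n-1} - \tfrac{1}{2}$ (the paper factors out $\tfrac{1}{2}$ rather than splitting off the $i=1$ term, but this is the same computation). Your closing remark about the integer bound $W(w) \le 2^{n-1}-1$ when $w(1)\neq 1$ also matches exactly how the paper uses this proposition in the proof of Theorem~\ref{f-terminates}.
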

\begin{proof}
Let $w \in S_n$. Then,
   \begin{align*}
       W\tup{ w } &= \sum_{\substack{i \in [n];\\ w\tup{ i }=i}} 2^{i-2} \le \sum_{\substack{i \in [n]}} 2^{i-2}  = \frac{1}{2}\tup{ \sum_{i=1}^{n}2^{i-1} } = \frac{1}{2}\tup{ \sum_{i=0}^{n-1}2^{i} }\\
       &= \frac{1}{2}\tup{ 2^{n-1+1}-1 } = 2^{n-1}-\frac{1}{2} < 2^{n-1}
   \end{align*} 
\end{proof}

As the next lemma shows, any homing shuffle increases the Wilf number of a permutation $w$, as long as it has not yet terminated on $w$.
\begin{lem}
\label{lemwilf}
    Let $w \in S_n$ be a permutation with $w\tup{1} \neq 1$.
    Let $\f$ be any homing shuffle.
    Then, $W\tup{\f_w} > W\tup{w}$.
\end{lem}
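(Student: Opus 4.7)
The plan is to track how the fixed-point set changes from $w$ to $\fw$ and show that the net change in Wilf number is strictly positive. Set $k := w\tup{1}$, so that $k \ge 2$ since $w\tup{1} \neq 1$. The crucial observation is that $k$ is a fixed point of $\fw$ (by Definition \ref{shufDef}(a)) but \emph{not} of $w$: indeed, $w\tup{1} = k$ and $w$ is injective, so the assumption $k \neq 1$ forces $w\tup{k} \neq k$.

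Let $A := \set{i \in [n] : w\tup{i} = i}$ and $B := \set{i \in [n] : \fw\tup{i} = i}$ denote the two fixed-point sets. By Definition \ref{shufDef}(b), $\fw\tup{i} = w\tup{i}$ for every $i > k$, so $A$ and $B$ agree on $(k,n]$. Hence the symmetric difference $A \triangle B$ is contained in $[k]$, and since $k \in B \setminus A$, we actually have $A \setminus B \subseteq [k-1]$.

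I would then compute
\[
W\tup{\fw} - W\tup{w} = \sum_{i \in B \setminus A} 2^{i-2} - \sum_{i \in A \setminus B} 2^{i-2}.
\]
The first sum is at least $2^{k-2}$, coming from the term at $k \in B \setminus A$. The second sum is bounded above by the total possible contribution from $[k-1]$, namely $\sum_{i=1}^{k-1} 2^{i-2} = 2^{k-2} - \frac{1}{2}$. Subtracting yields $W\tup{\fw} - W\tup{w} \ge \frac{1}{2} > 0$.

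There is no real obstacle here: the content of the argument is the elementary inequality that the new weight $2^{k-2}$ introduced at position $k$ strictly exceeds the worst-case loss from destroying every fixed point of $w$ below position $k$, because the weights grow geometrically. The one step requiring care is verifying that $k$ itself is not already a fixed point of $w$, which uses injectivity of $w$ together with the hypothesis $k \neq 1$.
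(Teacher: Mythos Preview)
Your proof is correct and follows essentially the same route as the paper: both compare the fixed-point contributions, cancel everything above position $k$, note that the new fixed point at $k$ contributes $2^{k-2}$, and bound the possible losses below $k$ by a geometric sum. The only cosmetic difference is that the paper also observes $1 \notin A$ (since $w(1)=k\neq 1$), which sharpens the lower bound on $W(\fw)-W(w)$ from your $\tfrac12$ to $1$; this is irrelevant for the lemma as stated.
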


\begin{proof}
    Let $k = w\tup{1}$.
    By the definition of Wilf numbers, we have
        \begin{align}
        W\tup{ \f_w } - W\tup{ w }
        &= \sum_{\substack{i \in [n];\\ \f_w\tup{ i }=i}} 2^{i-2} - \sum_{\substack{i \in [n];\\ w\tup{ i }=i}} 2^{i-2} .
        \label{pf.lemwilf.1}
        \end{align}
    However, part (2) of Definition \ref{shufDef} shows that $\f_w\tup{i} = w\tup{i}$ for all $i>k$.
    Thus, the two sums in \eqref{pf.lemwilf.1} agree in all their addends with $i > k$ (meaning that each such addend that appears in one sum must also appear in the other).
    This allows us to cancel all these addends, thus reducing both sums to $i \in [k]$.
    So \eqref{pf.lemwilf.1} becomes
        \begin{align}
        W\tup{ \f_w } - W\tup{ w }
        &= \sum_{\substack{i \in [k];\\ \f_w\tup{ i }=i}} 2^{i-2} - \sum_{\substack{i \in [k];\\ w\tup{ i }=i}} 2^{i-2} .
        \label{pf.lemwilf.2}
        \end{align}
    However, part (1) of Definition \ref{shufDef} shows that $\f_w\tup{k} = k$. Hence, the first sum in \eqref{pf.lemwilf.2} contains at least an addend for $i = k$. We can thus bound it from below:
    \begin{align*}
        \sum_{\substack{i \in [k];\\ \f_w\tup{ i }=i}} 2^{i-2} \ge 2^{k-2}.
    \end{align*}
    On the other hand, we have $w\tup{1} \neq 1$ and $w\tup{k} \neq k$ (since $k = w\tup{1} \neq 1$ entails $w\tup{k} \neq w\tup{1} = k$).
    Hence, the second sum in \eqref{pf.lemwilf.2} contains neither an addend for $i = 1$ nor an addend for $i = k$.
    We can thus bound it from above:
    \begin{align*}
        \sum_{\substack{i \in [k];\\ w\tup{ i }=i}} 2^{i-2}
        &\le \sum_{i=2}^{k-1} 2^{i-2}
        = 2^0 + 2^1 + \cdots + 2^{k-3}
        = 2^{k-2} - 1.
    \end{align*}
    Now, \eqref{pf.lemwilf.2} becomes
        \begin{align*}
        W\tup{ \f_w } - W\tup{ w }
        &= \underbrace{\sum_{\substack{i \in [k];\\ \f_w\tup{ i }=i}} 2^{i-2}}_{\ge 2^{k-2}} - \underbrace{\sum_{\substack{i \in [k];\\ w\tup{ i }=i}} 2^{i-2}}_{\le 2^{k-2} - 1}
        \ge 2^{k-2} - \tup{2^{k-2} - 1} = 1.
        \end{align*}
    Hence, $W\tup{ \f_w } \geq W\tup{ w } + 1 > W\tup{w}$.
\end{proof}

We now show a bound on the termination number of any shuffle $ \f $.
\begin{thm}
    \label{f-terminates}
    Let $ \f $ be a homing shuffle. Then, $\f$ terminates in less than $2^{n-1}$ iterations on each $w \in S_n$. In other words,
    \[
    \tn\f < 2^{n-1}
    \]
\end{thm}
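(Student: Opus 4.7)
The plan is to combine Proposition \ref{wBound} (the upper bound $W(w) < 2^{n-1}$) with Lemma \ref{lemwilf} (each non-terminating application of $\f$ strictly increases the Wilf number by at least $1$). Since the Wilf number can only rise and is bounded, termination is forced within fewer than $2^{n-1}$ steps.

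Concretely, I would fix $w \in S_n$ and argue as follows. Suppose for contradiction that $\f$ has not terminated on $w$ after any of the first $2^{n-1}$ iterations, i.e.\ $\f^{j}\tup{w}\tup{1} \neq 1$ for all $j$ with $0 \le j < 2^{n-1}$. Then I may apply Lemma \ref{lemwilf} to each of the permutations $\f^{j}\tup{w}$ (with $0 \le j < 2^{n-1}$) to conclude
\[
W\tup{\f^{j+1}\tup{w}} \ge W\tup{\f^{j}\tup{w}} + 1.
\]
Chaining these inequalities from $j = 0$ up to $j = 2^{n-1} - 1$ yields
\[
W\tup{\f^{2^{n-1}}\tup{w}} \ge W\tup{w} + 2^{n-1} \ge 2^{n-1},
\]
contradicting Proposition \ref{wBound}. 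Hence there is some smallest $m < 2^{n-1}$ with $\f^{m}\tup{w}\tup{1} = 1$, so $\f$ terminates after $m$ iterations on $w$. Since $w$ was arbitrary, the minimum $\tn\f$ from its definition exists and is less than $2^{n-1}$.

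There is no real obstacle here: the difficulty has already been absorbed into Lemma \ref{lemwilf} and Proposition \ref{wBound}. The only minor subtlety is logistical — one must justify the existence of the termination number $\tn\f$ before manipulating it, which is why I prefer the contradiction setup above (it simultaneously establishes existence and the bound), rather than starting from ``let $m = \tn\f$''. After the contradiction, Proposition \ref{termPr} ensures that termination persists for all later iterations, so the bound $\tn\f < 2^{n-1}$ follows immediately from the definition of $\tn\f$.
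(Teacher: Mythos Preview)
Your proof is correct and follows essentially the same approach as the paper: both argue by contradiction, using Lemma~\ref{lemwilf} to show the Wilf numbers of the iterates strictly increase (by at least $1$ each step) and Proposition~\ref{wBound} to bound them above by $2^{n-1}$. The paper phrases the contradiction as ``$2^{n-1}+1$ distinct integers in $[0,2^{n-1})$'' while you chain the inequalities directly to get $W\tup{\f^{2^{n-1}}\tup{w}} \ge 2^{n-1}$, but these are the same argument.
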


\begin{proof}
    Let $w \in S_n$.
    We must show that $\f$ terminates in less than $2^{n-1}$ iterations on $w$.
Assume to the contrary that $\f$ requires at least $2^{n-1}$ iterations to terminate on $w$. That is $\f^k\tup{w} \tup{1} \neq 1$, for any $k < 2^{n-1}$.
    Hence, the Wilf numbers 
    \[
    W\tup{\f^0\tup{w}},\ W\tup{\f^1\tup{w}},\ \ldots,\ W\tup{\f^{2^{n-1}}\tup{w}}
    \] all belong to $\mathbb{Z}_{\ge 0}$. Proposition \ref{wBound} shows that all these Wilf numbers are $< 2^{n-1}$, and Lemma \ref{lemwilf} shows that they strictly increase:
    \[
    W\tup{\f^0\tup{w}} < W\tup{\f^1\tup{w}} < \cdots < W\tup{\f^{2^{n-1}}\tup{w}}.
    \]
    Hence, these Wilf numbers are $2^{n-1}+1$ distinct integers in the half-open interval $[0, 2^{n-1})$.
    But this is impossible, since this interval contains only $2^{n-1}$ integers.
    This contradiction completes our proof.
\end{proof}

\section{Sortable Permutations}
Card decks that eventually map to the identity under a given homing shuffle have also been of interest in the literature.
These played a prominent role in establishing the lower bound for Topswops in \cite{morales2010}, where the authors used a method of combining such permutations to generate ones that take more iterations to terminate. 
We may formalize this notion of a permutation mapping to the identity as follows.
\begin{defn}
    Say a homing shuffle $ \f $ \emph{sorts} $  w \in S_{n}$ if $ \f^{m}\tup{ w } = \id \in S_{n} $ for some $ m \in \mathbb{N} $. If there is no such $m$ then we say $w$ is  \emph{not sortable} or \emph{unsortable} by $\f$.
\end{defn}
We may occasionally refer to the collection of all permutations $w$ that $\f$ sorts as the \emph{sortables} of $\f$.
For a first example of characterizing the sortables of a homing shuffle, consider again the transposition shuffle $\T$.
In the proof of Theorem \ref{T-n-1}, we found that $\T$ gradually reduces the cycle of $w$ that contains $1$, while leaving all other cycles unchanged.
Hence, after $n-1$ iterations of $\T$, the cycle of $w$ that contains $1$ will have disappeared (with all its elements becoming fixed points), while all the other cycles of $w$ will still be present.
Thus, $\T^{n-1}\tup{w}$ is the identity $ \id \in S_{n} $ if and only if the permutation $w$ has no other nontrivial cycles beyond the cycle that contains $1$ -- in other words, if and only if $ w $ is a single cycle that contains $1$.
We have thus shown the following.
\begin{prop}
    \label{sortableT}
    The shuffle $ \T $ sorts $ w \in S_{n} $ if and only if $ w $ is a single cycle that contains $1$. In other words, $ \T $ sorts $ w $ if and only if
    \[
        w = \cyc{i_{1},i_{2},\dots,i_{k}} \text{ for some } i_1,i_2,\dots,i_k \in [n] \text{ with } i_1 = 1.
    \]
\end{prop}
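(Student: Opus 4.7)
The plan is to lean almost entirely on the structural observation already established inside the proof of Theorem \ref{T-n-1}: that whenever $w\tup{1} \neq 1$, applying $\T$ leaves every cycle of $w$'s disjoint cycle decomposition unchanged \emph{except} for the cycle containing $1$, which loses exactly one element (namely $w\tup{1}$, which becomes a fresh fixed point). Once this has been pulled out as a reusable fact, both directions of the proposition fall out essentially by bookkeeping.

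For the forward (``if'') direction, I would assume $w = \cyc{i_1, i_2, \dots, i_k}$ with $i_1 = 1$, so that every element of $[n]$ outside $\set{i_1, \dots, i_k}$ is a fixed point of $w$. By the structural observation, each application of $\T$ shrinks the unique nontrivial cycle by one while creating no new nontrivial cycles, so after $k-1$ iterations the cycle has collapsed to the trivial cycle $\cyc{1}$, giving $\T^{k-1}_w = \varepsilon$. The degenerate case $k = 1$ corresponds to $w = \varepsilon$ itself, which is sorted after $0$ iterations.

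For the converse (``only if''), I would argue the contrapositive. Suppose $w$ has some nontrivial cycle $C$ not containing $1$. The structural observation implies that $C$ appears, completely unchanged, in the cycle decomposition of $\T^{j}\tup{w}$ for every $j \ge 0$ (either $\T^{j}\tup{w}$ already fixes $1$, in which case $\T$ has terminated and the decomposition has stabilized, or else $\T$ modifies only the cycle containing $1$). In particular, $\T^{j}\tup{w} \neq \varepsilon$ for all $j$, so $\T$ does not sort $w$. Hence if $\T$ does sort $w$, then every nontrivial cycle of $w$ contains $1$, which is precisely the statement of the proposition.

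I do not anticipate any real obstacle here; the only care needed is to cleanly isolate the ``$\T$ preserves all other cycles'' lemma from the proof of Theorem \ref{T-n-1} and to handle the trivial boundary case $w = \varepsilon$ (covered by allowing $k=1$). Everything else is a one-line consequence.
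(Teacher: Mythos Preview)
Your proposal is correct and follows essentially the same approach as the paper: both rely on the structural observation from the proof of Theorem~\ref{T-n-1} that $\T$ shrinks the cycle containing $1$ while leaving all other cycles untouched. The paper packages both directions into a single sentence (observing that $\T^{n-1}\tup{w} = \id$ iff $w$ has no nontrivial cycles beyond the one containing $1$), whereas you split the argument into two directions and handle the degenerate case explicitly, but the content is the same.
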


Thus we see that the sortable permutations of $ \T $ are easy to characterize.
This is not the case for other homing shuffles.
No characterization is known for the sortables of  $ \Ts $, and McKinley provides only a partial characterization of the sortables of $ \mck $ including some necessary conditions, and a bound on their number \cite{McK2015}.
Fitting with the generality of our approach, we seek to characterize the set of permutations that are not sortable by any homing shuffle $\f$. As we will see, these permutations are related to the well-known \emph{irreducible permutations}.
\begin{defn}[irreducible/reducible permutations]
    \label{irredPerms}
    Let $ w \in S_{n} $.
    \begin{enumerate}[]
        \item\textbf{(a)} We let $ i_{w} \in [n] $ be the smallest index $k \in [n]$ such that $w\tup{ [k] } = [k]$.
        \item\textbf{(b)} We say that $ w$ is \emph{irreducible} if $i_w = n$.
        Otherwise, we say that $ w $ is \emph{reducible}.
    \end{enumerate}
\end{defn}

\begin{exmp}
The permutation $w = \tup{3,2,1,5,4} \in S_5$ is reducible with $i_w = 3$ since $w([3])=[3]$. Meanwhile, the permutation $w' = \tup{2,3,4,5,1} \in S_5$ is irreducible as $w(5)=1$ and thus $1 \notin w([i])$ for any $i < 5$. More generally, any permutation in $S_n$ with $w(n)=1$ is irreducible by the above argument.
\end{exmp}

It is well known that the number of irreducible permutations in $ S_{n} $ satisfies the following recurrence \cite{KING2006}.

\begin{prop}
    \label{prop5}
    Let $ I_{n} $ denote the set of irreducible permutations in $ S_{n} $. Then
    \[
    |I_{n}| = n! - \sum_{i=1}^{n-1}|I_{i}|\tup{ n-i }!.
    \]
\end{prop}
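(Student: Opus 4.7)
The plan is to prove this by classifying every permutation $w \in S_n$ according to the statistic $i_w$, and showing that for each possible value $i \in [n]$ the number of permutations with $i_w = i$ is exactly $|I_i| \cdot (n-i)!$. Summing gives $n! = \sum_{i=1}^{n} |I_i|(n-i)!$, and isolating the $i=n$ term (for which $(n-i)! = 1$ and which counts irreducible permutations by Definition \ref{irredPerms}(b)) yields the claimed recurrence.

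The crux is the following structural claim: for each $i \in [n]$, the permutations $w \in S_n$ with $i_w = i$ are in bijection with pairs $(u, v)$ where $u \in I_i$ is irreducible and $v$ is an arbitrary bijection $[i+1, n] \to [i+1, n]$. The forward direction is straightforward: if $i_w = i$, then by definition $w\tup{[i]} = [i]$, so $w$ restricts to a permutation $u$ of $[i]$ and a permutation $v$ of $[i+1,n]$. For the restriction $u$ to land in $I_i$, I need to check it is irreducible, i.e.\ that no proper $k < i$ satisfies $u\tup{[k]}=[k]$; but if such a $k$ existed we would have $w\tup{[k]}=u\tup{[k]}=[k]$, contradicting the minimality of $i_w = i$.

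For the reverse direction, given an irreducible $u \in I_i$ and any bijection $v : [i+1,n] \to [i+1,n]$, I glue them into the unique $w \in S_n$ agreeing with $u$ on $[i]$ and with $v$ on $[i+1,n]$. Then $w\tup{[i]} = [i]$, so $i_w \le i$, and the irreducibility of $u$ forces $i_w \ge i$, hence $i_w = i$. Counting pairs gives $\lvert \set{w \in S_n : i_w = i}\rvert = |I_i| \cdot (n-i)!$.

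To conclude, since $i_w$ is well-defined for every $w \in S_n$ (the value $k=n$ always satisfies $w([n])=[n]$, so the minimum exists), the sets $\set{w : i_w = i}$ partition $S_n$ as $i$ ranges over $[n]$. Therefore
\[
n! = \sum_{i=1}^{n} |I_i|(n-i)! = |I_n| + \sum_{i=1}^{n-1}|I_i|(n-i)!,
\]
which rearranges to the stated formula. There is no real obstacle here; the only thing to be careful about is the minimality argument in the forward direction of the bijection, to ensure that the restriction to $[i_w]$ is genuinely irreducible rather than merely a permutation of $[i_w]$.
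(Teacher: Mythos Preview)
Your proof is correct and follows essentially the same approach as the paper: both classify permutations $w \in S_n$ by the value of $i_w$ and observe that the fiber over each $i$ is in bijection with $I_i \times S_{[i+1,n]}$, giving $|I_i|\,(n-i)!$ permutations. The only cosmetic difference is that the paper sums over $i \in [n-1]$ to count the reducible permutations directly, whereas you sum over all $i \in [n]$ and then split off the $i=n$ term; your writeup is in fact slightly more careful in spelling out both directions of the bijection.
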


\begin{proof}
Since a permutation is either reducible or irreducible, it suffices to count the reducible permutations, since
\[
\card{I_n} = n! - \card{S_n \setminus I_n} = n! - \tup{\text{\# of reducible } w \in S_n}.
\]
We recall that the reducible permutations $w \in S_n$ are those $w \in S_n$ that satisfy $i_w \in [n-1]$.
Thus we fix an $i \in [n-1]$, and count the permutations $w \in S_n$ with $i_w = i$. By the definition of $i_w$ the restriction $w|_{[i]}$ must be an irreducible permutation in $S_i$ while the restriction $w|_{[i+1,n]}$ is any permutation in $S_{[i+1,n]}$.
Thus,
\begin{align*}
    \tup{\text{\# of } w \in S_n \text{ such that } i_w = i} &= \tup{\text{\# of irreducible permutations in } S_i} \cdot \card{S_{[i+1,n]}}\\
    &= \card{I_i}(n-i)!,
\end{align*}
since $\card{S_{[i+1,n]}} = (n-i)!$.

Summing this over all $i \in [n-1]$, we obtain
\[
\tup{\text{\# of reducible } w \in S_n}
= \sum_{i=1}^{n-1} \card{I_i}(n-i)!,
\]
since the reducible permutations $w \in S_n$ are those $w \in S_n$ that satisfy $i_w \in [n-1]$.
Hence,
\begin{align*}
\card{I_n} &= n! - \tup{\text{\# of reducible } w \in S_n}  = n! - \sum_{i=1}^{n-1} \card{I_i}(n-i)!.
\end{align*}
\end{proof}

\begin{defn}[The unsortables]
    \label{def5}
    Let $ i_{w} $ be as in Definition \ref{irredPerms} for any $ w \in S_{n} $. Then the \emph{unsortables} in $ S_{n} $ are the permutations that belong to the set
   \[
   U_{n} := \set{w \in S_{n} | \text{ there is } i > i_{w} \text{ such that } w\tup{ i } \ne i}.
   \]
    In other words, they are the permutations $w \in S_n$ that have at least one non-fixed point larger than $i_w$.
\end{defn}

\begin{exmp}
    The permutation $w = \tup{3,2,1,5,4} \in S_5$ is unsortable, since $i_w=3$, but $5$ and $4$ are not fixed points of $w$. The permutation $w' = \tup{3,2,1,4,5}$ is not unsortable, since $4,5$ are fixed points of $w'$.
    An irreducible permutation $w \in S_n$ is never unsortable, since there is no $i > i_w = n$. Thus, for example, the permutation $\tup{5,4,3,2,1} \in S_5$ is not unsortable because it is irreducible.
\end{exmp}

Now we show the unsortables are indeed unsortable by any homing shuffle. Intuitively this is because the upper portion of the deck $[w\tup{i_w+1},w\tup{i_w+2},\dots,w\tup{n}]$ remains undisturbed by the shuffle process (\textit{i.e}., we have $\fw\tup{i} = w(i)$ for all $i > i_w$). Thus any non-fixed point of $w$ larger than $i_w$ remains a non-fixed point of $\fw$, as the proof of the below lemmas shows.

\begin{lem}
\label{i-decreases}
If $w \in U_n$, and $\f$ is any homing shuffle, then $i_{\fw} \leq i_w$.
\end{lem}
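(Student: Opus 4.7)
The plan is to reduce everything to Proposition \ref{prop1}(b), which says that $\f_w\tup{[i]} = w\tup{[i]}$ for every $i \ge k$, where $k := w\tup{1}$. If I can establish that $i_w \ge k$, then applying this identity with $i = i_w$ gives $\f_w\tup{[i_w]} = w\tup{[i_w]} = [i_w]$, since $w\tup{[i_w]} = [i_w]$ by definition of $i_w$. Because $i_{\f_w}$ is by definition the \emph{smallest} index $m \in [n]$ satisfying $\f_w\tup{[m]} = [m]$, and $i_w$ is such an index, the desired inequality $i_{\f_w} \le i_w$ would follow immediately.

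So the one nontrivial step is verifying $k \le i_w$. This is essentially a one-line observation: from $w\tup{[i_w]} = [i_w]$ and $1 \in [i_w]$ we get $k = w\tup{1} \in w\tup{[i_w]} = [i_w]$, hence $k \le i_w$. With this in hand, the proof is complete by the paragraph above.

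The main obstacle is not really an obstacle but a sanity check: the hypothesis $w \in U_n$ is not used anywhere in my argument — the conclusion $i_{\f_w} \le i_w$ actually holds for \emph{every} $w \in S_n$. I interpret the statement as restricting to $U_n$ only because this is the regime where the lemma will be subsequently applied (presumably together with the observation that non-fixed points of $w$ lying strictly above $i_w$ are preserved by $\f_w$, which is the content of Proposition \ref{prop1}(a) and which will keep the iterates inside $U_n$). Thus I would write the proof cleanly for arbitrary $w$ and simply note at the end that the hypothesis $w \in U_n$ is stronger than needed for this particular lemma.
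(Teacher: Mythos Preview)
Your proof is correct and essentially identical to the paper's: both show $k \le i_w$ via $k = w(1) \in w([i_w]) = [i_w]$, then invoke Proposition~\ref{prop1}(b) to get $\f_w([i_w]) = [i_w]$ and conclude by minimality. Your observation that the hypothesis $w \in U_n$ is unnecessary is also correct --- the paper's own proof never uses it either.
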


\begin{proof}
Let $w \in U_n$, and $\f$ an arbitrary homing shuffle. Set $k := w(1)$. Then, $k = w(1) \in w([i_w]) = [i_w]$, so that $k \le i_w$. Hence, by Proposition \ref{prop1} (b), we know that $\fw([i_w]) = w([i_w]) = [i_w]$. This yields $i_{\fw} \le i_w$ due to the minimality of $i_w$.
\end{proof}

\begin{lem}
    \label{LtoL}
   If $w \in U_n$, and $\f$ is any homing shuffle, then $\f(w) \in U_n$. 
\end{lem}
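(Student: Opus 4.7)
The plan is to combine the preceding lemma with the definition of a homing shuffle. Since $w \in U_n$, by definition there exists some index $i > i_w$ with $w(i) \neq i$. To show $\fw \in U_n$, I need to produce an index $j > i_{\fw}$ with $\fw(j) \neq j$. The natural candidate is the same $i$.

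First I would invoke Lemma \ref{i-decreases} to get $i_{\fw} \leq i_w$, so any index exceeding $i_w$ automatically exceeds $i_{\fw}$. Next, setting $k := w(1)$, I would observe that $k \in w([i_w]) = [i_w]$, hence $k \leq i_w < i$. This puts $i$ in the range where condition (b) of Definition \ref{shufDef} applies, yielding $\fw(i) = w(i) \neq i$. Therefore $i$ is a non-fixed point of $\fw$ with $i > i_{\fw}$, witnessing $\fw \in U_n$.

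The argument is essentially immediate once Lemma \ref{i-decreases} is in hand; the only thing to be careful about is making sure that the witness $i$ actually lies in the protected range $i > k$ (not just $i > i_w$) so that part (b) of the homing shuffle definition applies. This is guaranteed by the chain $k \leq i_w < i$, and is the one line where the work happens. There is no real obstacle here — the lemma is a bookkeeping consequence of the fact that homing shuffles only touch cards in positions $\leq k$, and positions above $i_w$ are out of reach because $k \leq i_w$.
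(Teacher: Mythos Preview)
Your proposal is correct and follows essentially the same approach as the paper: invoke Lemma~\ref{i-decreases} to get $i_{\fw}\le i_w$, then observe that the witness $i>i_w$ for $w\in U_n$ also works for $\fw$. If anything, you are slightly more careful than the paper's proof, since you explicitly verify $k\le i_w<i$ so that Definition~\ref{shufDef}(b) gives $\fw(i)=w(i)$, a step the paper leaves implicit.
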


\begin{proof}
Let $w \in U_n$, and $\f$ an arbitrary homing shuffle.
Lemma \ref{i-decreases} says $i_{\fw} \le i_w$, so that $i_w \ge i_{\fw}$.
Since $w \in U_n$, there exists an $i > i_w$ such that $w(i) \ne i$.
This $i$ also satisfies $i > i_w \ge i_{\fw}$.
Thus, there exists an $i > i_{\fw}$ such that $\fw(i) \ne i$.
In other words, $\fw \in U_n$.
\end{proof}

\begin{thm}
    \label{thm.Un-hopeless}
    If $ w \in U_{n} $ then there is no homing shuffle that sorts $w$.
\end{thm}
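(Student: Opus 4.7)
The plan is to leverage Lemma \ref{LtoL} (the set $U_n$ is closed under any homing shuffle) together with the observation that the identity $\varepsilon$ does not lie in $U_n$. Together these give the theorem almost immediately.

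First I would argue by induction on $m$ that $\f^m(w) \in U_n$ for every $m \in \NN$, whenever $w \in U_n$ and $\f$ is an arbitrary homing shuffle. The base case $m = 0$ is just the hypothesis $w \in U_n$. For the inductive step, assuming $\f^m(w) \in U_n$, applying Lemma \ref{LtoL} to $\f^m(w)$ in place of $w$ yields $\f^{m+1}(w) = \f(\f^m(w)) \in U_n$.

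Next I would observe that $\varepsilon \notin U_n$. Indeed, for the identity we have $i_{\varepsilon} = 1$ (since $\varepsilon([1]) = [1]$), but there is no $i > 1 = i_\varepsilon$ with $\varepsilon(i) \ne i$, as $\varepsilon$ fixes every element of $[n]$. So the defining condition of $U_n$ fails for $\varepsilon$.

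Combining these two facts, $\f^m(w) \in U_n$ for all $m$, while $\varepsilon \notin U_n$, so $\f^m(w) \ne \varepsilon$ for all $m \in \NN$. By the definition of \emph{sorts}, this means $\f$ does not sort $w$. Since $\f$ was an arbitrary homing shuffle, no homing shuffle sorts $w$. There is no real obstacle here; essentially all the work was done in Lemma \ref{LtoL}, and this theorem is a clean corollary via induction.
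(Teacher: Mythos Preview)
Your proof is correct and follows essentially the same approach as the paper: iterate Lemma~\ref{LtoL} to get $\f^m(w) \in U_n$ for all $m$, then note $\varepsilon \notin U_n$. Your justification for $\varepsilon \notin U_n$ is slightly more explicit than the paper's (which simply observes that every element of $U_n$ has a non-fixed point), but the argument is the same.
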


\begin{proof}
Let $\f$ be a homing shuffle and $w \in U_n$. Then Lemma \ref{LtoL}, used iteratively, shows that $\fw^m \in U_n$ for all $m \in \NN$. Thus to show that $w$ is not sorted by any homing shuffle it suffices to show $\varepsilon \notin U_n$. But this is obvious, since any $w \in U_n$ must have a non-fixed point.
\end{proof}

The converse is also true, and is a consequence of Theorem \ref{mshufSorts} below.

We now turn to counting the unsortable permutations. We can notice that almost all reducible permutations are unsortable. In fact, a reducible permutation $w \in S_n$ is sortable only if it fixes all $i > i_w$, that is, satisfies $[w(i_w+1),w(i_w+2),\dots,w(n)] = [i_w+1,i_w+2,\dots,n]$.
Hence, any irreducible permutation $u \in I_k \subseteq S_k$ for $k \leq n$ can be extended uniquely to a sortable permutation in $S_n$ by appending the numbers $k+1, k+2, \ldots, n$ at the end of its one-line notation; but extending it in any other way will produce an unsortable permutation.
This helps us compute the number of all unsortable permutations in $S_n$:

\begin{thm}[Size of the unsortables]
    \label{cardUn}
    Let $ I_{n} $ be as in Proposition \ref{prop5}, and $ U_{n} $ be the set of unsortables in $ S_{n} $. Then
    \[
        \card{U_{n}} = n! - \sum_{k=1}^{n} \card{I_{k}}
    \]
\end{thm}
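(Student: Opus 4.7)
The plan is to count the complement $S_n \setminus U_n$ (the permutations that are not unsortable) and show that $\card{S_n \setminus U_n} = \sum_{k=1}^{n} \card{I_k}$, from which the claimed formula follows by subtracting from $n!$.

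First I would rewrite the defining condition of $U_n$ by negation: $w \in S_n \setminus U_n$ if and only if every $i > i_w$ satisfies $w(i) = i$, i.e.\ $w$ restricts to the identity on $[i_w+1, n]$. This matches the heuristic paragraph preceding the theorem.

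Next I would set up an explicit bijection
\[
\Phi : S_n \setminus U_n \;\longrightarrow\; \bigsqcup_{k=1}^{n} I_k,
\qquad w \longmapsto w|_{[i_w]}.
\]
Given $w \in S_n \setminus U_n$, set $k := i_w \in [n]$. By the definition of $i_w$ we have $w([k]) = [k]$, so $w|_{[k]}$ is a genuine element of $S_k$. Moreover, the minimality built into the definition of $i_w$ forces $w|_{[k]}([j]) \ne [j]$ for every $j < k$, so $w|_{[k]} \in I_k$. Conversely, I would define $\Phi^{-1}$ by sending $u \in I_k$ (with $k \le n$) to the permutation $w \in S_n$ given by
\[
w(i) = \begin{cases} u(i), & i \le k, \\ i, & i > k. \end{cases}
\]
I would verify two things: (i) $i_w = k$, because $w([j]) = u([j]) \ne [j]$ for all $j < k$ by irreducibility of $u$, while $w([k]) = u([k]) = [k]$; and (ii) $w$ fixes every index above $i_w = k$ by construction, so $w \in S_n \setminus U_n$. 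The two constructions are manifestly inverse to each other, so $\Phi$ is a bijection.

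Summing cardinalities through $\Phi$ gives $\card{S_n \setminus U_n} = \sum_{k=1}^{n} \card{I_k}$, and therefore
\[
\card{U_n} = n! - \card{S_n \setminus U_n} = n! - \sum_{k=1}^{n} \card{I_k}.
\]
There is no real obstacle; the only step that deserves care is checking that $i_w = k$ in the inverse direction, which is exactly where the irreducibility of $u$ gets used (otherwise we would undercount by collapsing several extensions onto the same $w$).
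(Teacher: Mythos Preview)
Your argument is correct and is essentially the same as the paper's: the paper likewise counts the complement $S_n \setminus U_n$ by showing, for each fixed $k \in [n]$, that the map $w \mapsto w|_{[k]}$ is a bijection from $\{w \in S_n \setminus U_n : i_w = k\}$ to $I_k$, and then sums over $k$. You package these per-$k$ bijections into a single map $\Phi$ to the disjoint union, but the content is identical.
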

The first few values of $\card{U_n}$ can be seen in Table \hyperref[Un_1-12]{1}. 

\begin{proof}[of Theorem \ref{cardUn}]
We say that a permutation $w \in S_n$ is \emph{sortable} if it satisfies $w\tup{i} = i$ for all $i > i_w$.
Thus, a permutation $w \in S_n$ is unsortable if and only if it is not sortable.
This allows us to rewrite $\card{U_n}$ (that is, the number of unsortable permutations in $S_n$) as $n!$ minus the number of sortable permutations in $S_n$.
But
\begin{align}
    &\tup{\text{number of sortable permutations in $S_n$}} \nonumber\\
    =& \sum_{k=1}^n \tup{\text{number of sortable permutations $w \in S_n$ such that $i_w = k$}}.
    \label{eq.sortables-count.1}
\end{align}

Let us now fix a $k \in [n]$, and count the sortable permutations $w \in S_n$ such that $i_w = k$.
These permutations $w$ have the property that the restriction $ w|_{[k]} $ is an irreducible permutation in $ S_{k} $ (since $i_w = k$ and thus $w([k]) = w([i_w]) = [i_w] = [k]$, whereas $w([i]) \neq [i]$ for all $0 < i < k$), whereas the restriction $ w|_{[k+1,n]} $ is the identity (since otherwise, $w$ would be unsortable).
Conversely, any permutation $w \in S_n$ with this property is sortable and satisfies $i_w = k$.
Thus, we obtain a bijection
\begin{align*}
    \set{\text{sortable permutations $w \in S_n$ such that $i_w = k$}}
    &\to
    I_k , \\
    w &\mapsto w|_{[k]}.
\end{align*}
Consequently, the number of sortable permutations $w \in S_n$ such that $i_w = k$ equals $\card{I_k}$.

Having shown this for all $k \in [n]$, we can now rewrite \eqref{eq.sortables-count.1} as
\begin{align}
    \tup{\text{number of sortable permutations in $S_n$}}
    = \sum_{k=1}^n \card{I_k}.
    \label{eq.sortable-count.2}
\end{align}

But we have seen that the unsortable permutations are just the permutations that are not sortable. Hence, their number is $n!$ minus the number of sortable permutations.
By \eqref{eq.sortable-count.2}, this is $n! - \sum_{k=1}^n \card{I_k}$.
In other words, $\card{U_n} = n! - \sum_{k=1}^n \card{I_k}$.

\end{proof}

\begin{table}[!h]
\label{Un_1-12}
\begin{center}
\begin{tabular}{|c|c|c|c|c|c|c|c|c|c|c|c|c|}
     \hline
     $n$ &  1 & 2 & 3 & 4 & 5 & 6 & 7 & 8 & 9 & 10 & 11 & 12 \\
     \hline 
     $\card{U_n}$ & 0 & 0 & 1 & 6 & 31 & 170 & 1043 & 7230 & 56447 & 493042 & 4782139 & 51122982\\
     \hline
\end{tabular}
\caption{Values of $\card{U_n}$ for $n=1,2,\dots,12$}
\end{center}
\end{table}

\section{Max Shuffles}
Having found a set of permutations that is not sortable by any homing shuffle, we may wonder if this is the minimal such set.
In other words, is there a homing shuffle that sorts all $w \notin U_n$? In this section we answer in the affirmative that there is a family of such homing shuffles.
\begin{defn}[Max Shuffle]
    \label{mShuf}
   We say a homing shuffle $ \f $ is a \emph{max shuffle} if for every $w \in S_n$ with $w\tup{1} \neq 1$, we have
   \[
   \fw\tup{ 1 } = \max \tup{ w\tup{ [2,k]}},
   \qquad \text{where $k := w\tup{1}$}.
   \]
   Informally, after placing the $ k $-th card in its natural position, a max shuffle always places the largest of the cards $k$ has been moved past at the top of the deck.
\end{defn}

The definition of a max shuffle $\f$ uniquely determines how $\f$ transforms a permutation $w$ as long as $w\tup{1}$ is $1$, $2$ or $3$, but leaves some freedom if $w\tup{1} > 3$. Thus, there are many max shuffles for each $n$.

\subsection{Max Shuffles Sort Everything}

We seek to establish the following claim.
\begin{thm}
    \label{mshufSorts}
    If $ \M $ is a max shuffle and $ w \not\in U_{n} $, then $ \M $ sorts $ w $.
\end{thm}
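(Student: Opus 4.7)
The plan is to reduce the theorem to a single preservation lemma: if $w \notin U_n$, then $\Mw \notin U_n$. Once that is in hand, iterating gives $\M^m\tup{w} \notin U_n$ for every $m$; Theorem \ref{f-terminates} hands us some $m$ with $\M^m\tup{w}\tup{1} = 1$; and any $w'$ with $w'\tup{1} = 1$ and $w' \notin U_n$ must equal $\varepsilon$, because $w'\tup{1} = 1$ forces $i_{w'} = 1$ and then the condition $w' \notin U_n$ demands $w'\tup{i} = i$ for every $i > 1$. This gives $\M^m\tup{w} = \varepsilon$, which is exactly what it means for $\M$ to sort $w$.

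For the preservation lemma, I would set $k := w\tup{1}$. The case $k = 1$ is immediate, since it forces $w = \varepsilon$ already. Assume $k \ge 2$; then $k \in w\tup{[i_w]} = [i_w]$ gives $k \le i_w$, and I would split on whether $k = i_w$ or $k < i_w$. When $k = i_w$, we have $w\tup{[k]} = [k]$, so $w\tup{[2,k]} = [k-1]$, and the max rule forces $\Mw\tup{1} = k-1$; combining this with $\Mw\tup{[k-1]} = [k-1]$ from Proposition \ref{prop1}(c), I would pin $i_{\Mw}$ to $k-1$ and then check $\Mw\tup{i} = i$ for $i \ge k$ using $\Mw\tup{k} = k$ together with $w \notin U_n$. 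When $k < i_w$, irreducibility of $w|_{[i_w]}$ forces $w\tup{[k]} \ne [k]$, hence $\max\tup{w\tup{[k]}} > k$, so the max rule forces $\Mw\tup{1} > k$; this blocks $\Mw\tup{[k']} = [k']$ for every $k' \le k$, Proposition \ref{prop1}(b) identifies $\Mw\tup{[k']} = w\tup{[k']}$ for $k' \ge k$, and together these pin $i_{\Mw}$ to $i_w$. Then $\Mw\tup{i} = w\tup{i} = i$ for $i > i_w$ closes the case.

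The main obstacle, and the only place where the max shuffle hypothesis is genuinely used, is the case $k < i_w$: I need $i_{\Mw}$ not to fall into $[1, k]$, because on $(i_{\Mw}, i_w]$ the permutation $w$ may still have non-fixed points that would drag $\Mw$ into $U_n$. The max rule is tailored exactly for this, since placing the largest disturbed value at position $1$ forces $\Mw\tup{1} > k$ whenever $k < i_w$, pinning $i_{\Mw}$ to $i_w$. A homing shuffle without this property (such as $\Ts$ applied to $\tup{4321}$) can plant the value $1$ at position $1$ prematurely and freeze at a non-identity state, which is precisely the failure mode the max rule rules out.
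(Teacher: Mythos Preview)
Your proposal is correct and follows essentially the same route as the paper: prove the preservation lemma $w \notin U_n \Rightarrow \Mw \notin U_n$ (the paper's Lemma~\ref{nonUn-lem}), iterate it, then combine with termination (Theorem~\ref{f-terminates}) and the observation that a terminated permutation outside $U_n$ must be $\varepsilon$. Your case split $k = i_w$ versus $k < i_w$ matches the paper's Lemma~\ref{imw-lem} (c) and (b); the only cosmetic difference is that in the $k < i_w$ case you use the sharp observation $\Mw\tup{1} = \max\tup{w\tup{[k]} \setminus \set{k}} > k$ to block all $k' \le k$ at once, whereas the paper uses the weaker generic bound $\Mw\tup{1} \ge k-1$ and then handles $k' = k-1$ separately via Proposition~\ref{prop1}(c).
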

Towards a proof of Theorem \ref{mshufSorts}, we establish a few lemmas. The first of these describes how $i_w$ must change under a max shuffle. 
\begin{lem}
    \label{imw-lem}
    Consider a max shuffle $\M$, and a permutation $w \in S_n$ with $k := w(1)$. Let $i_w$ be as in Definition \ref{irredPerms}. Then we have the following.
    \begin{enumerate}[]
        \item\textbf{(a)} We have $i_w \ge k$.
        \item\textbf{(b)} If $i_w > k$ then $i_{\Mw} = i_w$.
        \item\textbf{(c)} If $i_w = k$ then $i_{\Mw} = k - 1$.
    \end{enumerate}
\end{lem}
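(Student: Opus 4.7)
The plan is to handle the three parts in order, with each relying on simple combinatorial bookkeeping about images of initial segments.

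For part (a), I would simply observe that $k = w(1) \in w\tup{[i_w]} = [i_w]$, which directly gives $k \le i_w$. This mirrors the argument already used in Lemma \ref{i-decreases} and needs no further work.

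For part (b), assume $i_w > k$. By Proposition \ref{prop1}(b) applied at $i = i_w$, we have $\Mw\tup{[i_w]} = w\tup{[i_w]} = [i_w]$, which gives $i_{\Mw} \le i_w$. To get equality, I need to rule out $\Mw\tup{[j]} = [j]$ for each $j < i_w$, and I would split on whether $j \ge k$ or $j < k$. For $k \le j < i_w$, Proposition \ref{prop1}(b) gives $\Mw\tup{[j]} = w\tup{[j]}$, and this cannot equal $[j]$ by the minimality of $i_w$. For $j < k$, I invoke the defining property of a max shuffle: $\Mw(1) = M := \max\tup{w\tup{[2,k]}}$. The key observation is that $M \ge k$ whenever $i_w > k$; indeed, if $M \le k - 1$, then $w\tup{[2,k]} \subseteq [k-1]$, and since both sides have the same cardinality, $w\tup{[2,k]} = [k-1]$, forcing $w\tup{[k]} = [k]$ and hence $i_w \le k$, a contradiction. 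Given $M \ge k > j$, the element $M = \Mw(1)$ lies in $\Mw\tup{[j]}$ but not in $[j]$, ruling out $\Mw\tup{[j]} = [j]$.

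For part (c), assume $i_w = k$, so $w\tup{[k]} = [k]$. Since $w(1) = k$, the restriction of $w$ to $[2,k]$ is a bijection onto $[k-1]$; in particular, $M = \max\tup{w\tup{[2,k]}} = k - 1$, so $\Mw(1) = k - 1$. Combining $\Mw\tup{[k]} = w\tup{[k]} = [k]$ (from Proposition \ref{prop1}(b)) with $\Mw(k) = k$ (from Definition \ref{shufDef}(a)) gives $\Mw\tup{[k-1]} = [k-1]$, hence $i_{\Mw} \le k - 1$. For the reverse inequality, I argue that for each $1 \le j < k-1$ we have $\Mw(1) = k-1 \notin [j]$ while $1 \in [j]$, so $\Mw\tup{[j]} \neq [j]$.

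The main obstacle is the short contradiction argument in part (b) showing $M \ge k$; the rest is a careful accounting of which initial segments are preserved. (One subtlety worth flagging is the boundary case $k = 1$ in part (c): since max shuffles are only constrained when $w(1) \ne 1$, I would note explicitly at the start of the proof that we may assume $k \ge 2$, because $k = 1$ makes $\Mw = w$ and no claim is being made.)
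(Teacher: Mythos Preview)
Your argument is correct and tracks the paper's proof closely. Part (a) is identical, and part (c) is the same idea in slightly different packaging: the paper obtains $\Mw([k-1])=[k-1]$ via Proposition~\ref{prop1}(c) and gets the lower bound from a general inequality $i_{\Mw}\ge \Mw(1)\ge k-1$ established once at the outset, whereas you compute $\Mw(1)=k-1$ explicitly and argue directly. The one substantive variation is in part (b). The paper splits the range $i<i_w$ into three cases $i<k-1$, $i=k-1$, and $k\le i<i_w$, using only the weak bound $\Mw(1)\ge k-1$ and handling $i=k-1$ separately via Proposition~\ref{prop1}(c). You instead prove the sharper bound $\Mw(1)\ge k$ under the hypothesis $i_w>k$ (by your pigeonhole contradiction: $\max w([2,k])\le k-1$ would force $w([k])=[k]$), which lets you dispose of all $j<k$ in one stroke. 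Your route is marginally cleaner; the paper's avoids the extra contradiction argument but pays with an extra case. Your flag about $k=1$ in part (c) is well taken --- the lemma as literally stated would assert $i_{\Mw}=0$ there, and the paper silently relies on this case never arising in the applications.
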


\begin{proof}
(a) We note that $k = w(1) \in w([i_w]) = [i_w]$, thus $k \le i_w$, and this proves (a).

Applying (a) to $\Mw$ instead of $w$, we find
\begin{align}
i_{\Mw} &\ge \Mw(1) = \max\tup{w([2,k])}
\qquad \left(\text{by the definition of a max shuffle}\right) \nonumber\\
&\ge k-1, \label{eq.imw-lem.1}
\end{align}
since a maximum of $k-1$ distinct positive integers is $\geq k-1$.
\medskip

(b) Assume $i_w > k$.
By Proposition \ref{prop1} (b), we then have $\Mw([i_w]) = w([i_w]) = [i_w]$.

Hence, it remains to show that $\Mw([i]) \ne [i] $ for all positive $i < i_w$. We show this by considering the three cases $i < k-1$, $i = k-1$ and $i \geq k$ separately:
Fix a positive $i < i_w$. We must show that $\Mw([i]) \ne [i] $.
\begin{enumerate}
\item For $i < k-1$, it follows from \eqref{eq.imw-lem.1} and the definition of $i_{\Mw}$.
\item For $i = k-1$, we observe that Proposition \ref{prop1} (c) yields $\Mw([k-1]) = w([k]) \setminus \set{k} \ne [k-1]$ (because $i_w > k$ entails $w([k]) \ne [k]$). Thus, $\Mw([i]) \ne [i]$ for $i = k-1$.
\item For $i \geq k$,  Proposition \ref{prop1} (b) yields $\Mw([i]) = w([i]) \ne [i]$, since $i < i_w$.
\end{enumerate}
Thus, $\Mw([i]) \ne [i] $ is always verified, and the proof of (b) is complete.
\medskip

(c) Now assume $i_w = k$. Then $w([k]) = [k]$. But Proposition \ref{prop1} (c) shows that
\[
\Mw([k-1]) = w([k]) \setminus \set{k} = [k]\setminus \set{k} = [k-1].
\]
Then this and equation \eqref{eq.imw-lem.1} entails $i_{\Mw} = k-1$, and (c) is proved.
\end{proof}

\begin{lem}
    \label{nonUn-lem}
    Let $\M$ be a max shuffle. Let $w \in S_n$ be such that $w \notin U_n$. Then, $\M\tup{w} \notin U_n$ as well.
\end{lem}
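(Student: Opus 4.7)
The plan is to verify directly that every index larger than $i_{\Mw}$ is a fixed point of $\Mw$, which is precisely what it means for $\Mw \notin U_n$. The assumption $w \notin U_n$ translates to: every $i > i_w$ satisfies $w\tup{i} = i$. Thus the task reduces to understanding how $i_{\Mw}$ compares with $i_w$ and with $k := w\tup{1}$, information that Lemma \ref{imw-lem} supplies in full.

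First I would dispose of the trivial case $k = 1$, where Definition \ref{shufDef} forces $\Mw = w$ and there is nothing to prove. Assume then $k \geq 2$. Lemma \ref{imw-lem}(a) gives $k \leq i_w$, and parts (b) and (c) split the remaining situation into two subcases according to whether $i_w > k$ or $i_w = k$.

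If $i_w > k$, then $i_{\Mw} = i_w$ by Lemma \ref{imw-lem}(b). Any index $i > i_{\Mw}$ then satisfies $i > i_w > k$, so Definition \ref{shufDef}(b) yields $\Mw\tup{i} = w\tup{i}$, and this equals $i$ by the hypothesis $w \notin U_n$. If instead $i_w = k$, then Lemma \ref{imw-lem}(c) gives $i_{\Mw} = k - 1$, so I must check $\Mw\tup{i} = i$ for every $i \geq k$. At $i = k$ this is precisely Definition \ref{shufDef}(a); for $i > k$, Definition \ref{shufDef}(b) again gives $\Mw\tup{i} = w\tup{i}$, which equals $i$ because $i > k = i_w$ and $w \notin U_n$.

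The argument is essentially mechanical once Lemma \ref{imw-lem} is in hand. The only subtlety I anticipate is remembering to verify the index $i = k$ in the second subcase: here $i_{\Mw} = k-1$ has dropped strictly below $k$, placing $k$ itself into the range of indices that must be checked, and this is handled by the fixed-point clause of the homing shuffle axioms rather than by any appeal to $w \notin U_n$.
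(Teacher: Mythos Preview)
Your proof is correct and follows essentially the same route as the paper's: use Lemma~\ref{imw-lem}(a) to get $k \le i_w$, then split into the cases $i_w > k$ and $i_w = k$, invoking parts (b) and (c) of that lemma respectively, and in the second case single out $i = k$ for treatment via Definition~\ref{shufDef}(a). Your explicit disposal of the case $k = 1$ is a harmless (arguably cleaner) addition, since the max-shuffle formula in Definition~\ref{mShuf} and hence Lemma~\ref{imw-lem}(c) are only stated for $w\tup{1} \neq 1$.
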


\begin{proof}
Let $k := w(1)$. We first note that $i_w \ge k$ by Lemma~\ref{imw-lem} (a). Hence, for all $i > i_w$, we have $i > i_w \ge k$ and therefore, by the definition of a homing shuffle,
\begin{equation}
    \Mw(i) = w(i) = i
    \label{eq.nonUn-lem.1}
\end{equation}
since $w \notin U_n$ and $i > i_w$. We must show $\Mw \notin U_n$. Equivalently, we must show that $\Mw$ fixes all $i > i_{\Mw}$.
Since $i_w \ge k$, it suffices to consider the cases $i_w > k$ and $i_w = k$.
In the former case, Lemma \ref{imw-lem} (b) yields $i_{\Mw} = i_w$, so that the claim follows from \eqref{eq.imw-lem.1}.
In the latter case, Lemma \ref{imw-lem} (c) yields $i_{\Mw} = k-1$, so that the only $i > i_{\Mw}$ that does not satisfy $i > i_w$ is $k$. Then, by equation \eqref{eq.nonUn-lem.1}, we need only check that $k$ is a fixed point of $\Mw$, but this follows from the definition of a homing shuffle (Definition \ref{shufDef} (a)).
\end{proof} 

Now the claim of Theorem \ref{mshufSorts} follows easily from the above lemmas.

\begin{proof}[of Theorem \ref{mshufSorts}]
    Assume $w \notin U_n$. Then by Theorem \ref{f-terminates} there is a $m \in \NN$ so that $\Mw^m(1) = 1$.
    Thus, $i_{\Mw^m} = 1$. By Lemma \ref{nonUn-lem} (applied iteratively), we know that $\Mw^m \notin U_n$.
    So, from the definition of the unsortables  (Definition \ref{def5}), we have $\Mw^m(i) = i$ for all $i > i_{\Mw^m}$. Since $i_{\Mw^m} = 1$, this means that $\Mw^m$ fixes all $i > 1$ and thus fixes $1$ as well. In other words, $\Mw^m = \varepsilon$.
\end{proof}

Combining Theorem~\ref{mshufSorts} with Theorem~\ref{thm.Un-hopeless}, we obtain the minimality of the set $U_n$.
\begin{cor}
    Let $w \in S_n$. There is no homing shuffle that sorts $w$ if and only if $w \in U_n$.
\end{cor}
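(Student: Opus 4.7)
The plan is to prove this biconditional by handling the two directions separately, as each is essentially the content of one of the two main theorems of this section. Since the corollary is precisely the combined statement claimed in the paragraph immediately preceding it, my ``proof'' is really just an assembly of those two results.

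For the ``if'' direction, I would note that this is literally the statement of Theorem \ref{thm.Un-hopeless}: if $w \in U_n$, then no homing shuffle sorts $w$. A one-sentence citation suffices.

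For the ``only if'' direction, I would argue contrapositively: assume $w \notin U_n$, and exhibit a homing shuffle that sorts $w$. The natural candidate is a max shuffle $\M$. The existence of such an $\M$ for every $n$ is guaranteed by the remark following Definition \ref{mShuf}, since the max-shuffle condition only constrains $\Mw(1)$ and $\Mw(k)$, leaving positions $2, \ldots, k-1$ free to be filled by any fixed convention (for concreteness, one could take the remaining cards in their original relative order). Then Theorem \ref{mshufSorts} applied to this $\M$ and the hypothesis $w \notin U_n$ gives that $\M$ sorts $w$, so some homing shuffle sorts $w$.

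I do not anticipate any real obstacle, since no new combinatorial content is introduced. The only substantive point to verify is that at least one max shuffle exists for each $n$, and this is immediate from Definition \ref{mShuf}: the conditions $\Mw(1) = \max\tup{w([2,k])}$ and $\Mw(k) = k$ prescribe only two output values, so any tie-breaking rule for the remaining positions yields a valid max shuffle.
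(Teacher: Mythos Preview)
Your proposal is correct and matches the paper's approach exactly: the paper states only that the corollary follows by combining Theorem~\ref{thm.Un-hopeless} with Theorem~\ref{mshufSorts}, and your two-direction argument is precisely that combination. Your extra remark about the existence of at least one max shuffle is a harmless (and arguably helpful) addition that the paper leaves implicit.
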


\subsection{The Termination Number of a Max Shuffle}

We now consider the termination numbers of the max shuffles. Perhaps unsurprisingly, all max shuffles have the same termination number. 
\begin{thm}
    \label{mshufSt2}
    All max shuffles have the same termination number: that is,
    \[
    \tn \M = \tn \Mo \qquad \text{ for any max shuffles } \M,\Mo .
    \]
\end{thm}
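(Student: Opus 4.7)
The plan is to prove a stronger pointwise claim: for any $w \in S_n$ and any two max shuffles $\M, \Mo$, the sequences of ``top cards'' $\tup{\M^j w}\tup{1}$ and $\tup{\Mo^j w}\tup{1}$ agree for every $j \in \NN$. Since any homing shuffle terminates on $w$ at the first $j$ with top card $1$, this will immediately give $\tn\M = \tn\Mo$.

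To establish the claim, I would fix $w$, set $p_j := \M^j w$ and $p'_j := \Mo^j w$, and induct on $j$ with the following invariant: there is a common value $c_j$ such that $p_j\tup{1} = p'_j\tup{1} = c_j$ and $p_j\tup{i} = p'_j\tup{i}$ for all $i \in [c_j + 1, n]$. Since both $p_j$ and $p'_j$ are permutations of $[n]$, this invariant automatically forces $p_j\tup{[2,c_j]} = p'_j\tup{[2,c_j]}$ as sets (they are the common complement of the image of $\set{1} \cup [c_j+1,n]$). Hence by the defining property of a max shuffle,
\[
p_{j+1}\tup{1} = \max p_j\tup{[2, c_j]} = \max p'_j\tup{[2, c_j]} = p'_{j+1}\tup{1} =: c_{j+1} .
\]
Agreement of $p_{j+1}$ and $p'_{j+1}$ on $i \in [c_{j+1}+1, n]$ then splits naturally into the case $i > c_j$ (where Definition \ref{shufDef}(b) and the inductive hypothesis give equality) and the case $i = c_j$ (where Definition \ref{shufDef}(a) forces both values to equal $c_j$).

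The key step -- and the main obstacle to making this induction go through -- is showing that these two cases actually exhaust the range $[c_{j+1}+1, n]$, i.e., that no ``truly free'' positions $i$ with $c_{j+1} < i < c_j$ need to be checked (these are precisely the positions where $\M$ and $\Mo$ may genuinely disagree on individual values). This reduces to the inequality $c_{j+1} \geq c_j - 1$, which I would prove by a short pigeonhole argument: $p_j\tup{[2, c_j]}$ consists of $c_j - 1$ distinct values drawn from $[n] \setminus \set{c_j}$, and $c_j - 1$ such values cannot all lie in $[c_j - 2]$. With this bound in hand the interval $[c_{j+1}+1,\, c_j - 1]$ is empty, the induction closes, and the theorem follows by taking $j$ up to the termination time on $w$ and then taking the maximum over $w \in S_n$.
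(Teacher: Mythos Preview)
Your argument is correct and is essentially the paper's own proof with the abstraction stripped away: your induction invariant ``$p_j(1)=p'_j(1)=c_j$ and $p_j(i)=p'_j(i)$ for all $i>c_j$'' is exactly the equivalence relation $\sim$ of Definition~\ref{shuf_eq}, and your inductive step is precisely Lemma~\ref{shuf_inv}, including the crucial pigeonhole observation $c_{j+1}\ge c_j-1$. The only cosmetic omission is the trivial case $c_j=1$ (where the max-shuffle clause does not apply and both shuffles act as the identity), which you may want to mention explicitly.
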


This is essentially because the only freedom a max shuffle has is to permute the cards between the fixed point and the largest card placed at the front, and these transformations do not change what the next front card will be. A precise proof is given below.

We can furthermore determine the shared termination number exactly.

\begin{thm}
    \label{mshufTN}
    Let $\M $ be a max shuffle. Assume that $n \ge 2$. Then 
    \[
    \tn \M = 2n-3.
    \]
\end{thm}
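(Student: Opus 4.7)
The plan is to prove $\tn\M \le 2n-3$ by strong induction on $n$ and then exhibit an explicit permutation achieving this bound. For the base $n = 2$, the only non-identity permutation $(2,1)$ terminates in one step, so $\tn\M = 1 = 2n - 3$. For the inductive step, fix $w \in S_n$ with $n \ge 3$ and set $k := w(1)$. If $i_w < n$, then Proposition \ref{prop1}(b) gives $\Mw([i_w]) = [i_w]$, and a direct check of Definition \ref{mShuf} shows the induced map $w|_{[i_w]} \mapsto \Mw|_{[i_w]}$ is itself a max shuffle on $S_{i_w}$. Since $w$ and $w|_{[i_w]}$ terminate together, the inductive hypothesis gives at most $2i_w - 3 \le 2n - 5$ iterations.

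The substantive case is $i_w = n$, i.e., $w$ is irreducible. Set $j := w^{-1}(n)$; irreducibility forces $j < n$, and when $k < n$ also $j > 1$. I split by initial configuration into three patterns. \emph{Pattern (i):} if $k = n$, then $w([2,n]) = [n-1]$, so $\Mw(1) = n - 1$ and Lemma \ref{imw-lem}(c) gives $i_{\Mw} = n - 1$; iterating exhausts $k$ in exactly $n - 1$ steps. \emph{Pattern (ii):} if $k < n$ and $j \le k$, then $n \in w([2,k])$ forces $\Mw(1) = n$, landing in pattern (i) after one step, for total at most $n$. \emph{Pattern (iii):} if $k < n$ and $j > k$, the key claim is $\Mw(1) \in [k+1, n-1]$ and $\Mw(j) = n$. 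The second is immediate from Proposition \ref{prop1}(a). For the first, $w([k])$ is a $k$-element subset of $[n]$ with $w([k]) \ne [k]$ (since $i_w = n > k$), so it contains some element exceeding $k$; this element cannot equal $n$ (since $j > k$ means $n \notin w([k])$), so $\max(w([2,k])) \in [k+1, n-1]$. Iterating pattern (iii), $k$ strictly increases while $j$ stays put, so pattern (iii) lasts at most $j - k_0 \le (n-1) - 2 = n - 3$ iterations before transitioning to pattern (ii). Summing, the total is at most $(n - 3) + 1 + (n - 1) = 2n - 3$.

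For the matching lower bound, I would verify that the $n$-cycle $w_0 = \cyc{1, 2, \ldots, n}$, with one-line form $(2, 3, \ldots, n, 1)$, realizes the worst case. Here $k_0 = 2$ and $j = n - 1$, putting us in pattern (iii) with the maximal $j - k_0 = n - 3$. A routine induction on $t$ shows $w_t(1) = t + 2$ and $w_t(n-1) = n$ for $0 \le t \le n - 3$: positions $[t+3, n]$ of $w_t$ remain equal to those of $w_0$, so $w_t([2, t+2]) = \{2, 3, \ldots, t+1, t+3\}$ with max $t + 3$. Thus $k$ climbs $2, 3, \ldots, n-1$ through pattern (iii); then $k_{n-2} = n$ via pattern (ii), and Case B descends in $n - 1$ further iterations, totalling $2n - 3$. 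The main obstacle is pattern (iii) in the upper bound: verifying $k$'s strict increase depends on the irreducibility constraint $w([k]) \ne [k]$, while tracking the fixed position of $n$ relies on Proposition \ref{prop1}(a); the rest is routine bookkeeping.
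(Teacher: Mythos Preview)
Your proof is correct, and it takes a genuinely different route from the paper's.

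For the upper bound, the paper does not induct on $n$ at all. Instead it proves a single key lemma (Lemma~\ref{max-term-lem2}): if $w(1)\ge \Mw(1)$ then $\M$ terminates on $w$ in $w(1)-1$ steps. The main argument then observes that the front card can strictly increase for at most $n-w(1)\le n-2$ steps before this lemma kicks in, giving $2n-3$ total. Your argument instead reduces to the irreducible case by strong induction on $n$, then tracks the position $j=w^{-1}(n)$: the front card climbs toward $j$ (pattern~(iii), using $w([k])\neq[k]$ and $n\notin w([k])$), jumps to $n$ once $j\le k$ (pattern~(ii)), and then descends one step at a time via Lemma~\ref{imw-lem}(c) (pattern~(i)). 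This is more structural and yields the same bound with the same witness for tightness. One small point: your phrase ``the induced map $w|_{[i_w]}\mapsto \Mw|_{[i_w]}$ is itself a max shuffle on $S_{i_w}$'' needs a sentence of justification, since a priori this only defines the map on the orbit of $w|_{[i_w]}$; you should extend an arbitrary $u\in S_{i_w}$ to $S_n$ (e.g.\ by setting $\tilde u(i)=w(i)$ for $i>i_w$) and check that $u\mapsto \M(\tilde u)|_{[i_w]}$ is a max shuffle on $S_{i_w}$ agreeing with your iteration. Also, ``Case B'' in your last paragraph should read ``pattern~(i)''.

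For the lower bound, both proofs use the cycle $(2,3,\dots,n,1)$. The paper fixes one explicit max shuffle, computes the trajectory, and then invokes Theorem~\ref{mshufSt2} to transfer the bound to every max shuffle. Your argument is a bit cleaner here: since you track only the front card and the tail positions $[t+3,n]$ --- data that is identical across all max shuffles by Definition~\ref{shufDef}(b) and the max condition --- your count of $2n-3$ steps applies to every max shuffle directly, without appealing to Theorem~\ref{mshufSt2}.
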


We state our result for $n \ge 2$, since the formula would be false in the (obvious) cases $n < 2$.

We begin our argument for Theorem \ref{mshufSt2} with a claim slightly stronger then our vague statement above. We first define a useful equivalence relation.

\begin{defn}
    \label{shuf_eq}
    Let $w,w' \in S_n$. We write $w \sim w'$ if $w(1) = w'(1)$ and $w(i) = w'(i)$ for all $i > w(1)$.
\end{defn}

It is easy to see that $\sim$ is an equivalence relation on $S_n$. It is also easy to enumerate the equivalence classes. Let $w \in S_n$ be a permutation with $k := w(1)$, and let $[w] := \set{w' \in S_n : w \sim w'}$ be its equivalence class.
Then every $w' \in [w]$ is obtained from $w$ by permuting the underlined portion of its one-line notation in the following equality:
    \[
    w = \tup{k,\underline{w(2),w(3),\dots,w(k)},w(k+1),\dots,w(n)} .
    \]
Thus $\card{[w]} = \tup{k-1}!$. It is not much harder to count the equivalence classes. Indeed, each class $[w]$ is determined by the value $k = w\tup{1}$ and by the $n-k$ values $w(k+1), w(k+2), \ldots, w(n)$. There are $\tup{n-1}\tup{n-2}\cdots k = \dbinom{n-1}{n-k} \cdot \tup{n-k}!$ possible choices for the latter $n-k$ values (as they have to be $n-k$ distinct elements of the set $\set{1,2,\ldots,n}$, but cannot include $k = w\tup{1}$) and thus $\binom{n-1}{n-k} \cdot \tup{n-k}!$ many equivalence classes with the given $w\tup{1} = k$ value. Thus
   \begin{align*}
   \tup{\text{ total number of equivalence classes }} &= \sum_{k=1}^n \binom{n-1}{n-k}\tup{n-k}! = \sum_{k=1}^n \frac{\tup{n-1}!}{\tup{k-1}!}  \\
   &\le \tup{n-1}! \cdot e \qquad \text{with } e = \exp 1.
   \end{align*}
   
The next lemma shows that max shuffles preserve this equivalence relation.

\begin{lem}
    \label{shuf_inv}
    Let $w,w' \in S_n$, and let $\M,\Mo$ be two (possibly the same) max shuffles. 
    
    If $w \sim w'$ then $\M\tup{w} \sim \Mo\tup{w'}$.
\end{lem}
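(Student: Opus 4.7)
The plan is to verify the two defining conditions of $\M(w) \sim \Mo(w')$ directly. Set $k := w(1) = w'(1)$. If $k = 1$, then $w = w'$ and both $\M(w)$ and $\Mo(w')$ equal $w$ (since any homing shuffle fixes a permutation with $w(1) = 1$), so the claim is immediate; from here on I would assume $k > 1$.

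For the first condition, $\M(w)(1) = \Mo(w')(1)$, I would argue that $w([2,k]) = w'([2,k])$ as sets. Since $w$ and $w'$ are permutations of $[n]$ that agree at position $1$ (value $k$) and at every position $i > k$, the remaining $k-1$ values of $[n]$ are distributed among positions $[2,k]$ in both cases. Applying the max shuffle definition to $\M$ and to $\Mo$ then gives
\[
    \M(w)(1) = \max\tup{w([2,k])} = \max\tup{w'([2,k])} = \Mo(w')(1),
\]
and I would call this common value $k'$.

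For the second condition, I need $\M(w)(i) = \Mo(w')(i)$ for every $i > k'$. The key numerical observation is $k' \ge k-1$: the set $w([2,k])$ consists of $k-1$ distinct positive integers, so its maximum is at least $k-1$. Therefore $i > k'$ forces $i \ge k$, and the equality splits into two cases handled by the homing shuffle axioms alone. At $i = k$, Definition~\ref{shufDef}(a) gives $\M(w)(k) = k = \Mo(w')(k)$; at $i > k$, Definition~\ref{shufDef}(b) gives $\M(w)(i) = w(i) = w'(i) = \Mo(w')(i)$.

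The main (indeed, the only) subtlety is the inequality $k' \ge k-1$. Without it, one would need to compare $\M(w)(i)$ and $\Mo(w')(i)$ for positions $k' < i < k$, and these are precisely the positions where distinct max shuffles are free to disagree --- Definition~\ref{mShuf} constrains a max shuffle only at position $1$, leaving the intermediate positions $2,\dots,k-1$ undetermined. The remark that the maximum of a $(k-1)$-subset of positive integers cannot be strictly less than $k-1$ pushes this ``ambiguous'' range below the threshold $k'$, keeping it outside the scope of $\sim$.
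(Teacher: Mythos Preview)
Your proof is correct and follows essentially the same route as the paper's: establish $w([2,k]) = w'([2,k])$ by complementing the agreement on positions $>k$, apply the max-shuffle definition to get $\M(w)(1) = \Mo(w')(1) = k'$, use $k' \ge k-1$ to reduce the second condition to positions $i \ge k$, and finish with the two homing-shuffle axioms. Your explicit treatment of the $k=1$ case is a small improvement in rigor over the paper, which applies the max-shuffle formula without separating out this boundary case.
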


\begin{proof}
Assume $w \sim w'$.
Let $k = w\tup{1} = w'\tup{1}$.
Then, since $w\tup{i}=w'\tup{i}$ for all $i > k$, we have that $w\tup{[k+1,n]} = w'\tup{[k+1,n]}$, and by taking complements we also have that $w\tup{[k]}=w'\tup{[k]}$.
Removing the equal values $w(1) = w'(1)$ from these two sets, we obtain
$w\tup{[2,k]} = w'\tup{[2,k]}$.
Since $\M$ and $\Mo$ are max shuffles, we have
\[
\Mw\tup{1} = \max\tup{w\tup{[2,k]}} = \max\tup{w'\tup{[2,k]}} = \Mowp\tup{1}.
\]
It remains to show that
\begin{equation}
    \Mw\tup{i} = \Mowp\tup{i} \qquad \text{for all } i > \Mw\tup{1}.
    \label{pf.shuf_inv.3}
\end{equation}
We first note that $\Mw\tup{1} = \Mowp\tup{1} \ge k-1$, since the maximum of a set of $k-1$ distinct positive integers is always $\geq k-1$.
Now let $i > \Mw\tup{1}$.
If $i > k$, then the definition of a max shuffle shows that $\Mw\tup{i} = w\tup{i}$ and $ \Mowp\tup{i} = w'\tup{i}$. Combining this with $w\tup{i} = w'\tup{i} $ (which is because $w \sim w'$), we obtain
\[
\Mw\tup{i} = w\tup{i} = w'\tup{i} = \Mowp\tup{i} .
\]
Thus, \eqref{pf.shuf_inv.3} is proved for all $i > k$. But \eqref{pf.shuf_inv.3} also holds for $i = k$ (since the definition of a max shuffle shows that $\Mw\tup{k} = k$ and $\Mowp\tup{k} = k$).
Thus, \eqref{pf.shuf_inv.3} holds for all $i \geq k$, and therefore for all $i > \Mw\tup{1}$ (since $i > \Mw\tup{1} \ge k-1 $ implies $i \ge k$).
This completes our proof.
\end{proof}

From the previous lemmas, it follows that the front card is the same no matter the max shuffle being considered.
\begin{cor}
    \label{f_match}
    Let $ \M,\Mo $ be two max shuffles. Then for all $i \in \mathbb{N} $ and $w \in S_n$, we have
    \[
    \Mw^{i}\tup{ 1 } = \Mow^{i}\tup{ 1 }.
    \]
\end{cor}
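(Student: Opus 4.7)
The plan is to prove the stronger statement that $\M_w^i \sim \Mow^i$ for every $i \in \NN$, by induction on $i$, and then extract the desired conclusion from the definition of $\sim$. The induction hypothesis (stronger than the corollary) is natural because Lemma \ref{shuf_inv} supplies exactly the step needed: it says that $\sim$ is preserved not just by a single max shuffle but under two possibly different max shuffles applied to equivalent inputs.

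For the base case $i=0$, both iterates equal $w$, so $\M_w^0 = w = \Mow^0$, and $w \sim w$ by reflexivity of $\sim$. For the inductive step, suppose $\M_w^i \sim \Mow^i$. Applying Lemma \ref{shuf_inv} to these two equivalent permutations, with the max shuffle $\M$ on the left and $\Mo$ on the right, yields
\[
\M\tup{\M_w^i} \sim \Mo\tup{\Mow^i},
\]
which is precisely $\M_w^{i+1} \sim \Mow^{i+1}$. This closes the induction.

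Finally, by Definition \ref{shuf_eq}, whenever $u \sim u'$ we have in particular $u(1) = u'(1)$. Applying this to $u = \M_w^i$ and $u' = \Mow^i$, we conclude $\M_w^i(1) = \Mow^i(1)$ for all $i \in \NN$, which is the statement of the corollary. The only real work is invoking Lemma \ref{shuf_inv}; no further obstacle arises, since the equivalence relation $\sim$ is engineered precisely to make this induction go through.
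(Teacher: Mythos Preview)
Your proof is correct and follows essentially the same approach as the paper: start from $w \sim w$, use Lemma~\ref{shuf_inv} iteratively (which you spell out as an explicit induction) to obtain $\M_w^i \sim \Mow^i$, and then read off the equality of front cards from Definition~\ref{shuf_eq}. The only difference is that the paper compresses the induction into the phrase ``repeated application of Lemma~\ref{shuf_inv}.''
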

\begin{proof}
Since $w \sim w$, repeated application of Lemma \ref{shuf_inv} yields $\Mw^{i} \sim \Mow^{i}.$ Then the definition of $\sim$ (Definition \ref{shuf_eq}) yields $\Mw^{i}\tup{ 1 } = \Mow^{i}\tup{ 1 }$. 
\end{proof}

Now the claim of Theorem \ref{mshufSt2} follows easily from the above.

\begin{proof}[of Theorem \ref{mshufSt2}]
    Let $\M,\Mo$ be two max shuffles. Corollary \ref{f_match} shows that $\Mw^{\tn \Mo}(1) = \Mow^{\tn \Mo}(1) = 1$ for all $w \in S_n$. In other words, $\Mw$ terminates after $\tn \Mo$ iterations on each $w \in S_n$.    
    Thus $\tn \Mo \ge \tn \M$. Similarly, $\tn \M \ge \tn \Mo$. Thus $\tn \M = \tn \Mo$.
\end{proof}

We now seek to establish Theorem \ref{mshufTN}. Before proving this result we show a few lemmas. In all of them, $w$ denotes an arbitrary permutation in $S_n$.

\begin{lem}
\label{max-term-lem1}
    If $w(1) = \Mw(1)$, then $w(1)=1$ and $\Mw = w$.
    That is, if the front card does not change, then $\M$ has already terminated (\textit{i.e}., terminates after $0$ steps).
\end{lem}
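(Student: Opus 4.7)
The plan is to let $k := w(1)$ and show that the hypothesis $\Mw(1) = w(1) = k$ forces $k = 1$. I would argue by contradiction: suppose $k \geq 2$. Then by the definition of a max shuffle (Definition \ref{mShuf}), we have
\[
\Mw(1) = \max\tup{w\tup{[2,k]}}.
\]
Since $w$ is a bijection and $w(1) = k$, the value $k$ does not appear in $w\tup{[2,k]}$, so every element of $w\tup{[2,k]}$ is distinct from $k$, and in particular $\max\tup{w\tup{[2,k]}} \ne k$. This contradicts the assumption $\Mw(1) = k$, so we must have $k = 1$, i.e., $w(1) = 1$.

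With $w(1) = 1$ in hand, the second conclusion $\Mw = w$ follows immediately from the observation made just before the Termination Numbers section: namely, that for any homing shuffle $\f$ and any $w \in S_n$, we have $\f(w) = w$ if and only if $w(1) = 1$. Since $\M$ is a homing shuffle, this applies directly and gives $\Mw = w$.

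There is essentially no obstacle here; the argument is just a direct unpacking of the max-shuffle definition together with the bijectivity of $w$, plus the previously noted equivalence between $w(1) = 1$ and $\f(w) = w$.
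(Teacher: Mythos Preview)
Your proof is correct, but it takes a slightly different route from the paper's. You invoke the \emph{max shuffle} definition (Definition~\ref{mShuf}) to see that for $k \ge 2$ the new front card is $\max\tup{w\tup{[2,k]}}$, and then use injectivity of $w$ to rule out this maximum being $k$. The paper instead uses only the general \emph{homing shuffle} property (Definition~\ref{shufDef}~(a)) that $\Mw(k) = k$; combined with the hypothesis $\Mw(1) = k$, injectivity of $\Mw$ forces $k = 1$. So the paper applies injectivity to $\Mw$ rather than to $w$, and never touches the max-shuffle-specific formula. The upshot is that the paper's argument actually proves the lemma for \emph{any} homing shuffle, not just max shuffles, whereas your argument is tailored to max shuffles. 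Both are equally short and valid for the stated lemma.
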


\begin{proof}
    Since $w(1)$ is a fixed point of $\Mw$ (Definition \ref{shufDef} (a)) we have that
    \[
    \Mw(w(1)) = w(1) = \Mw(1).
    \]
    Then, as $\Mw$ is a bijection, we may apply $\Mw^{-1}$ to both sides of the above. This yields $w(1)=1$.  Thus, we have terminated and $\Mw = w$.
\end{proof}

\begin{lem}
    \label{max-term-lem2}
    Let $ \M $ be a max shuffle, and $ k = w\tup{ 1 }$ be the front card.
    \[
        \text{If } w\tup{1} \ge \Mw\tup{ 1 } \text{ then } \Mw^{k-1}\tup{ 1 } = 1 .
    \]
    In other words, if $\M$ decreases the front card of $w$, then $ \M $ terminates on $ w $ after $ k-1 $ iterations.
\end{lem}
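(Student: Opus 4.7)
The plan is to extract strong structural information from the hypothesis and then set up an induction that tracks a shrinking initial segment of the deck. Write $k := w(1)$. Since $\Mw(1) = \max(w([2,k]))$ is the maximum of $k-1$ distinct positive integers, we automatically have $\Mw(1) \ge k-1$, so the hypothesis $w(1) \ge \Mw(1)$ forces $\Mw(1) \in \set{k-1, k}$. If $\Mw(1) = k$, then Lemma \ref{max-term-lem1} gives $k=1$, and the desired conclusion $\Mw^{0}(1) = 1$ is immediate. So we may assume $\Mw(1) = k-1$.

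Next I would extract from $\max(w([2,k])) = k-1$ that $w([2,k]) = [k-1]$ (since this is a set of $k-1$ distinct positive integers bounded by $k-1$), and hence $w([k]) = [k]$. This is the key structural observation: once we know $w([k]) = [k]$, subsequent iterations of $\M$ act purely on the top $k$ cards, and a max shuffle whittles this active segment down one position at a time. To make this precise I would prove by induction on $j \in \set{0, 1, \ldots, k-1}$ that $\Mw^j(1) = k-j$ and $\Mw^j([k-j]) = [k-j]$. The base case $j=0$ is what we just established. For the inductive step, the inductive hypothesis yields $\Mw^j([2, k-j]) = [k-j-1]$, so the max shuffle property gives $\Mw^{j+1}(1) = \max([k-j-1]) = k-j-1$, while Proposition \ref{prop1}(c) applied to $\Mw^j$ (whose front card is $k-j$) gives $\Mw^{j+1}([k-j-1]) = \Mw^j([k-j]) \setminus \set{k-j} = [k-j-1]$. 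Taking $j = k-1$ then yields $\Mw^{k-1}(1) = 1$ as desired.

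The only real obstacle is the opening structural step, namely recognizing that the hypothesis $w(1) \ge \Mw(1)$ is strong enough to pin down $w([k]) = [k]$ exactly. Everything after that is clean bookkeeping with Proposition \ref{prop1}(c) and the definition of a max shuffle; both the front-card update and the invariance of the initial segment come out of the same two ingredients at each inductive step.
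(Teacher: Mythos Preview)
Your proof is correct and follows essentially the same route as the paper: both extract $\Mw(1)=k-1$ and $w([k])=[k]$ from the hypothesis, then use Proposition~\ref{prop1}(c) to show the front card drops by exactly one at each step. The only cosmetic difference is that the paper inducts on $k$ (re-applying the lemma's hypothesis to $\Mw$), whereas you fix $w$ and induct forward on the iteration count $j$ with the explicit invariant $\Mw^j([k-j])=[k-j]$.
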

\begin{proof}
We proceed by induction on $k$. The base case $k=1$ follows from Lemma \ref{max-term-lem1} (since $\Mw\tup{1} \le w\tup{1} = k = 1$ entails $\Mw\tup{1} = 1 = w\tup{1}$).
Now consider some $k > 1$. Assume for our inductive hypothesis that the statement holds for $k-1$, and that $k := w\tup{1} \ge \Mw\tup{1}$. We wish to show $\Mw^{k-1}\tup{1}=1$. We first note that if $w\tup{1}=\Mw\tup{1}$ then by Lemma \ref{max-term-lem1} we have $w\tup{1}=1$, contradicting $w\tup{1} = k > 1$.
Thus we in fact have $w\tup{1} > \Mw\tup{1}$. Now by the definition of max shuffle (Definition \ref{mShuf})
\begin{align*}
\Mw\tup{1} = \max\tup{w\tup{[2,k]}} \ge k-1,
\end{align*}
since the maximum of $k-1$ distinct positive integers is always $\ge k-1$.
Since $\Mw\tup{1} < w\tup{1} = k$, this yields $\Mw\tup{1}=k-1$. Thus $\max\tup{w\tup{[2,k]}} = \Mw\tup{1} = k-1$, and therefore $w\tup{[2,k]}=[k-1]$, since the maximum of $k-1$ distinct positive integers is $k-1$ if and only if they are simply the integers $1,2,\dots,k-1$.
Taking the union of this with $\set{w\tup{1}} = \set{k}$, we conclude that $\set{w\tup{1}} \cup w\tup{[2,k]} = \set{k} \cup [k-1]$. In other words,
$w\tup{[k]} = [k]$.

Next we shall show that $\Mw^2\tup{1} \le \Mw\tup{1}$. Indeed, if $\Mw\tup{1} = 1$, then this is clear (since in this case $\Mw^2 = \M_{\Mw} = \Mw$). Otherwise, it follows from
\begin{align*}
\Mw^{2}\tup{1} &= \max\tup{\Mw\tup{[2,k-1]}} \qquad\tup{\text{by Definition \ref{mShuf}}}\\
&\le \max\tup{\Mw\tup{[k-1]}} \qquad\tup{\text{since } \Mw\tup{[2,k-1]} \subseteq \Mw\tup{[k-1]}}\\
&= \max\tup{w\tup{[k]}\setminus \set{k}} \qquad\tup{\text{by Proposition \ref{prop1} (c)}} \\
&= \max\tup{[k]\setminus \set{k}} \qquad\tup{\text{since } w\tup{[k]}=[k]} \\
& = k-1 = \Mw\tup{1}.
\end{align*}

In other words, $k-1 = \Mw\tup{1} \ge \Mw^2\tup{1}$.
Hence, we may apply our induction hypothesis to $\Mw$ and $k-1$ to conclude that $\M_{\Mw}^{k-2}\tup{1} = 1$.
Since $\M^{k-2}_{\Mw}=\M^{k-2}\tup{\Mw}=\Mw^{k-1}$, this proves our claim.

\end{proof}

\begin{cor}
    \label{max-term-cor}
    Let $ \M $ be a max shuffle. 
    \[
        \text{If } w\tup{1} \ge \Mw\tup{ 1 } \text{ then } \Mw^{n-1}\tup{ 1 } = 1 .
    \]
\end{cor}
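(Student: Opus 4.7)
The plan is to derive this corollary as an immediate consequence of Lemma \ref{max-term-lem2} combined with Proposition \ref{termPr} (termination is forever). First I would set $k := w(1)$ and note that since $w$ is a permutation of $[n]$, we have $k \in [n]$, so in particular $k - 1 \le n - 1$.

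Given the hypothesis $w(1) \ge \Mw(1)$, Lemma \ref{max-term-lem2} applies and tells us that $\Mw^{k-1}(1) = 1$. In other words, $\M$ terminates after $k-1$ iterations on $w$. By Proposition \ref{termPr}, once $\M$ has terminated after $m$ iterations on $w$, it remains terminated on $w$ for all $q \ge m$. Applied with $m = k-1$ and $q = n-1 \ge k-1$, this yields $\Mw^{n-1}(1) = 1$, completing the proof.

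There is no real obstacle here: the whole content is already in Lemma \ref{max-term-lem2}, and the only new observation is the trivial bound $k - 1 \le n - 1$, together with the fact that termination persists under further iteration. The corollary is essentially just a uniform restatement of Lemma \ref{max-term-lem2} that replaces the $w$-dependent exponent $k-1$ with the $w$-independent exponent $n-1$, which will be convenient for the proof of Theorem \ref{mshufTN} in the next step of the paper.
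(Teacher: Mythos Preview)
Your proposal is correct and follows essentially the same approach as the paper's proof, which simply says to apply Lemma \ref{max-term-lem2} and note that $k \le n$. You are just more explicit in invoking Proposition \ref{termPr} to pass from termination after $k-1$ steps to termination after $n-1$ steps, which the paper leaves implicit.
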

\begin{proof}
    Apply Lemma \ref{max-term-lem2} to $k = w\tup{1}$, and notice that $k \le n$.
\end{proof}

Now we may easily prove Theorem \ref{mshufTN}.

\begin{proof}[of Theorem \ref{mshufTN}]
Let $w \in S_n$. We shall show that $\M$ terminates after $2n-3$ iterations on $w$. This will yield $\tn \M \le 2n-3$, thus proving one half of the theorem.

The sequence of front cards
\[
\tup{\Mw^0(1),\ \Mw^1(1),\ \Mw^2(1),\ \dots}
\]
cannot increase forever.
Thus there must be a $d \ge 0$ such that $\Mw^d(1) \ge \Mw^{d+1}(1)$.
Consider the smallest such $d$.
Then the sequence of front cards strictly increases for the first $d$ iterations:
\[
k := w(1) = \Mw^0(1) < \Mw^1(1) < \Mw^2(1) < \cdots < \Mw^d(1).
\]
This entails $d \le n-k$, since the longest increasing sequence of integers starting at $k$ and bounded above by $n$ is the sequence $k,k+1,\dots,n$ of length $n-k+1$. 

Now, if $k=1$, then $\M$ terminates after $0$ iterations on $w$, and hence after $2n-3$ iterations too.
Hence, assume $k \ge 2$. Then $d \le n-k \le n-2$. Now $\Mw^d(1) \ge \Mw^{d+1}(1)$ allows us to apply Corollary \ref{max-term-cor} to $\Mw^d$. This yields $\M^{n-1}_{\M^d_w}\tup{1} = 1$. In other words, $\Mw^{d+n-1}\tup{1} = 1$.
Hence, $\M$ terminates after $d+n-1$ iterations on $w$, and thus after $2n-3$ iterations too (since $d \le n-2$ and so $d+n-1 \le 2n-3$).
This proves $\tn \M \le 2n-3$.

It remains to show $\tn \M \ge 2n-3$.
By Theorem \ref{mshufSt2}, it suffices to show this for one given max shuffle $\M$.
Let $w \in S_n$ be the permutation with one-line notation $(2,3,4,\dots,n,1)$. For example, if $n=5$, then $\oln w = (2,3,4,5,1)$.
We may consider the max shuffle $\M$ described explicitly by
\[
\M\tup{w} = \cyc{w\tup{k},k,k'} w, \qquad \text{ where } k = w\tup{1} \text{ and } k' = \max\tup{w\tup{[2,k]}}
\]
(as long as $k \neq 1$; otherwise $\M\tup{w} = w$; we furthermore understand $\cyc{w\tup{k},k,k'}$ to mean $\cyc{w\tup{k},k}$ when $k' = w\tup{k}$).
It is not hard to see that $\M$ is really a max shuffle.
Informally, $\M$ first swaps the front card $k$ with $w(k)$ before swapping $w(k)$ with largest card $k'$.
Let us use the notation $w \overset{\M}{ \mapsto } w'$ as shorthand for $w' = \M(w)$.
Then we may describe how $\M$ acts on $w=\tup{2,3,4,5,1}$ for $n = 5$ as follows. In following computations we place a dot over $k$, a hat over $w(k)$, and a tilde over $k'$.

\begin{align*}
    \tup{\dot{2},\hat{3},4,5,1} &\overset{\M}{ \mapsto } \tup{\dot{3},2,\hat{4},5,1}\\
     &\overset{\M}{ \mapsto } \tup{\dot{4},2,3,\hat{5},1}\\
     &\overset{\M}{ \mapsto } \tup{\dot{5},2,3,\tilde{4},\hat{1}}\\
     &\overset{\M}{ \mapsto } \tup{\dot{4},2,\tilde{3},\hat{1},5}\\
     &\overset{\M}{ \mapsto } \tup{\dot{3},\tilde{2},\hat{1},4,5}\\
     &\overset{\M}{ \mapsto } \tup{\dot{2},\hat{1},3,4,5}\\
     &\overset{\M}{ \mapsto } \tup{\dot{1},2,3,4,5}.
\end{align*}
The process carries over to arbitrary $n$. Each card (except $1$) appears in front once prior to the $n$-th card and once after.
We are using the notation $[a,b]$ for the list $\tup{a,a+1,\ldots,b-1,b}$.
 \begin{align*}
        (\dot{2},\hat{3},4,\dots,n,1) &\overset{\M}{ \mapsto } (\dot{3},2,\hat{4},\dots,n,1)\\
                          &\overset{\M}{ \mapsto } (\dot{4},2,3,\dots,n,1)\\
                          & \cdots \\
                          &\overset{\M}{ \mapsto } (\dot{k},[2,k-1],[\hat{k+1},n],1) \quad\tup{\text{for each } k \in \set{2,3,\ldots,n} \text{ in order} }\\
                          & \cdots \\
                          &\overset{\M}{ \mapsto } (\dot{n},2,3,\dots,\tilde{n-1},\hat{1})\\
                          &\overset{\M}{ \mapsto } (\dot{n-1},2,3,\dots,\tilde{n-2},\hat{1},n)\\
                          &\overset{\M}{ \mapsto } (\dot{n-2},2,3,\dots,\hat{1},n-1,n)\\
                          & \cdots \\
                          &\overset{\M}{ \mapsto } (\dot{k},[2,\tilde{k-1}],\hat{1},[k+1,n]) \quad\tup{\text{for each } k \in \set{n,n-1,\ldots,1} \text{ in order}}\\
                          & \cdots \\
                          &\overset{\M}{ \mapsto } (\dot{1},2,3,\dots,n-1,n).
\end{align*}
Thus $\M$ terminates in exactly $2n-3$ iterations on $w$.
This proves $\tn \M \ge 2n-3$ for our max shuffle $\M$, and therefore for every max shuffle $\M$, as desired.
\end{proof}

\section{Concluding discussions}

While the above may be a helpful framework for approaching problems concerning homing shuffles, it does not address the current open questions in this area. We highlight three of these now.

\begin{enumerate}[]
    \item\textbf{Question 1.} How many permutations are sorted by $\mck$?
    \item\textbf{Question 2.} What are the permutations sorted by $\Ts$ and how many are there?
    \item\textbf{Question 3.} What is the termination number of $\Ts$?
\end{enumerate}

In regards to question 1, McKinley characterizes the permutations sorted by $\mck$ in \cite{McK2015}, and shows that their proportion to the number $n!$ of all permutations is bounded above by $\frac{e}{n}$. However she also notes that numerical evidence suggests the proportion is closer to $\frac{1}{n}$. One would hope to either improve the bound or find an explicit formula.

In regards to question 3, only bounds are known. Aside from the trivial $\tn \Ts \le 2^{n-1}$ from Theorem \ref{f-terminates}, Knuth has shown that $\tn \Ts \le f_{n+1} - 1$ \cite{TAoCP4A}. However, these upper bounds are far from the quadratic lower bound shown by Morales and Sudborough \cite{morales2010}. Indeed, numerical evidence suggests that $\tn \Ts$ is much closer to the lower bound \cite{OEIS_TS}. Thus any improvement of the upper bound would represent substantial progress.

In regards to question 2, it is easy to see one characterization, albeit not one that leads to any substantial bound. We claim it follows easily from the definition of $\Ts$ that if $\Ts$ sorts $w \in S_n$ then there are some $k_1,k_2,\dots,k_m \in [n]$ with $m \le n$ such that
\[
w = r_{k_{1}}r_{k_{2}} \cdots r_{k_{m-1}}r_{k_{m}} \text{ and } k_i \text{ is a fixed point of } r_{k_{1}}r_{k_{2}} \cdots r_{k_{i-1}} \text{ for each } 1 < i \le m.
\]
Then since $r_{k_i}$ fixes all $i > k_i$ it follows that any increasing tuple $1 < k_1 < k_2 < \cdots < k_m \le n$ corresponds to a sortable permutation.
Then, as there are $2^{n-1}$ such increasing tuples, we know there are at least $2^{n-1}$ permutations that $\Ts$ sorts (the distinctness of these permutations is easy to check).
Of course, these are not the only sortables, since $r_{2}r_{3}{r_{4}}r_{2}r_{5}r_{4}$ is also sortable in $S_5$. Thus a possible way to obtain a bound is to enumerate all the sequences that correspond to the sortable $w$.

\acknowledgments
I would like to thank Professor Darij Grinberg for advising me in this research. I am deeply appreciative of the many hours spent discussing the problem and teaching me how to write a mathematics paper. I would also like to thank Drexel University for the opportunity to participate in mathematics research for my COOP cycle. We also thank the referees for their cordial and constructive comments, and in particular for bringing our attention to the master's thesis of Joyce Pechersky \cite{Pe2023}.

\bibliographystyle{plain} 
\bibliography{references} 
\end{document}